\newlength{\drop}
\definecolor{amethyst}{rgb}{0.6, 0.4, 0.8}
\definecolor{burgundy}{rgb}{0.5, 0.0, 0.13}
\newtheoremstyle{remboldstyle}
  {}{}{}{}{\bfseries}{.}{.5em}{{\thmname{#1 }}{\thmnumber{#2}}{\thmnote{ (#3)}}}
\theoremstyle{remboldstyle}
\newtheorem{theorem}{Theorem}
\newtheorem{lemma}[theorem]{Lemma}
\newtheorem{proposition}[theorem]{Proposition}
\newtheorem{remark}{Remark}
\title{\textbf{Dynamic properties of double porosity/permeability model}}
\author{\textbf{Kalyana B.~Nakshatrala} \\
  {\small Associate Professor, Department of Civil and Environmental Engineering \\
  University of Houston, Houston, Texas 77204, USA.}\\
  {\small email:~\texttt{knakshatrala@uh.edu}, phone: +1-713-743-4418}}
\keywords{double porosity/permeability models; backward-in-time uniqueness; reciprocal relations; variational principle; dynamic response; flow through porous media}
\begin{document}

%===========================;
%  Title page of the paper  ;
%===========================;
\begin{titlepage}
  \drop=0.1\textheight
  \centering
  \vspace*{\baselineskip}
  \rule{\textwidth}{1.6pt}\vspace*{-\baselineskip}\vspace*{2pt}
  \rule{\textwidth}{0.4pt}\\[\baselineskip]
       {\Large \textbf{\color{burgundy}
       Dynamic  properties of double porosity/permeability model}}\\[0.3\baselineskip]
       \rule{\textwidth}{0.4pt}\vspace*{-\baselineskip}\vspace{3.2pt}
       \rule{\textwidth}{1.6pt}\\[\baselineskip]
       \scshape
       An e-print of this paper is available on arXiv. \par
       \vspace*{1\baselineskip}
       Authored by \\[\baselineskip]

  {\Large K.~B.~Nakshatrala\par}
  {\itshape Department of Civil \& Environmental Engineering \\
  University of Houston, Houston, Texas 77204. \\
  \textbf{phone:} +1-713-743-4418, \textbf{e-mail:} knakshatrala@uh.edu \\
  \textbf{website:} http://www.cive.uh.edu/faculty/nakshatrala}\\[\baselineskip]

%%%
  \vfill
  {\scshape 2023} \\
  {\small Computational \& Applied Mechanics Laboratory} \par
\end{titlepage}

%=========================;
%  Abstract of the paper  ;
%=========================;
\begin{abstract}
Understanding fluid movement in multi-pored materials is vital for energy security and physiology. For instance, shale (a geological material) and bone (a biological material) exhibit multiple pore networks. Double porosity/permeability models provide a mechanics-based approach to describe hydrodynamics in aforesaid porous materials. However, current theoretical results primarily address steady-state response, and their counterparts in the transient regime are still wanting. The chief aim of this paper is to fill this knowledge gap. We present three principal properties---with rigorous mathematical arguments---that the solutions under the double porosity/permeability model satisfy in the transient regime: backward-in-time uniqueness, reciprocity, and a variational principle. We employ the ``energy method''---exploiting the physical total kinetic energy of the flowing fluid---to establish the first property and Cauchy-Riemann convolutions to prove the next two. The results reported in this paper---qualitatively describe the dynamics of fluid flow in double-pored media---have (a) theoretical significance, (b) practical applications, and (c) considerable pedagogical value. In particular, these results will benefit practitioners and computational scientists in checking the accuracy of numerical simulators. The backward-in-time uniqueness lays a firm theoretical foundation for pursuing inverse problems in which one predicts the prescribed initial conditions based on data available about the solution at a later instance. 
\end{abstract}

\maketitle

\vspace{-0.3in}

%==================================;
%  Include all the sections below  ;
%==================================;
\setcounter{figure}{0}   

%***********************************************;
%                                               ;
%  NAME                                         ;
%    Front_matter.tex                           ;
%                                               ;
%  WRITTEN BY                                   ;
%    Kalyana B. Nakshatrala                     ;
%                                               ;
%***********************************************;

%===============================;
%  Section: Principal notation  ;
%===============================;
\section*{PRINCIPAL NOTATION}

\vspace{-0.1in}

\begin{longtable}{ll}\hline
  % \begin{tabular}{ll} \hline
  \multicolumn{1}{c}{\textbf{Symbol}} & \multicolumn{1}{c}{\textbf{Quantity}} \\
  %%%%%
   \hline \multicolumn{2}{c}{\emph{Mathematical operators}} \\ \hline
  $\bullet$ & dot product (the standard inner product on Euclidean spaces) \\
  $\star$ & time convolution operation \\
  $\mathrm{div}[\cdot]$ & spatial divergence operator \\
  $\mathrm{grad}[\cdot]$ & spatial gradient operator \\
  %%%%%
  \hline \multicolumn{2}{c}{\emph{Time-related quantities}} \\ \hline
  $t$ & time \\ 
  $\mathcal{T}$ & length of the time interval of interest \\ 
  $\partial(\cdot)/\partial t$ & partial derivative with respect to time \\
  %%%%%
  \hline \multicolumn{2}{c}{\emph{Geometry-related quantities}} \\ \hline
  $\Omega$ & domain \\
  $\partial \Omega$ & boundary of the domain \\
  $\Gamma_1^{p}$ & part of the boundary with a prescribed macro pressure \\
  $\Gamma_2^{p}$ & part of the boundary with a prescribed micro pressure \\
  $\Gamma_1^{u}$ & part of the boundary with a prescribed normal component of macro velocity \\
  $\Gamma_2^{u}$ & part of the boundary with a prescribed normal component of micro velocity \\
  $\phi_{1}(\mathbf{x})$ & porosity of the macro pore-network [dimensionless] \\
  $\phi_{2}(\mathbf{x})$ & porosity of the micro pore-network [dimensionless]  \\
  $\widehat{\mathbf{n}}(\mathbf{x})$ & unit outward normal vector on the boundary \\
  $\mathbf{x}$ & a spatial point \\
  %%%%%
  \hline \multicolumn{2}{c}{\emph{Time-dependent solution fields}} \\ \hline
  $p_{1}(\mathbf{x},t)$ & pressure in the macro pore-network (``macro pressure'') [$\mathrm{N/m^2}$] \\
  $p_{2}(\mathbf{x},t)$ & pressure in the micro pore-network (``micro pressure'') [$\mathrm{N/m^2}$] \\
  $\mathbf{u}_{1}(\mathbf{x},t)$ & Darcy velocity in the macro pore-network (``macro velocity'') [m/s] \\
  $\mathbf{u}_{2}(\mathbf{x},t)$ & Darcy velocity in the micro pore-network (``micro velocity'') [m/s] \\
  %%%%%
  \hline \multicolumn{2}{c}{\emph{Prescribed quantities}} \\ \hline
  $p_{\mathrm{p}1}(\mathbf{x},t)$ & prescribed macro pressure on the boundary 
  $\Gamma_{1}^{p}$ [$\mathrm{N/m^2}$] \\
  $p_{\mathrm{p}2}(\mathbf{x},t)$ & prescribed micro pressure on the boundary $\Gamma_{2}^{p}$ [$\mathrm{N/m^2}$] \\
  $u_{n1}(\mathbf{x},t)$ & prescribed normal component of macro velocity on the boundary $\Gamma_{1}^{u}$ [$\mathrm{m/s}$]  \\
  $u_{n2}(\mathbf{x},t)$ & prescribed normal component of micro velocity on the boundary $\Gamma_{2}^{u}$ [$\mathrm{m/s}$] \\
  $\mathbf{u}_{01}(\mathbf{x})$ & prescribed initial velocity in the macro pore-network [m/s] \\
  $\mathbf{u}_{02}(\mathbf{x})$ & prescribed initial velocity in the micro pore-network [m/s] \\
  %%%%%
  \hline \multicolumn{2}{c}{\emph{Material properties}} \\ \hline
  $\beta$ & mass transfer coefficient [dimensionless] \\
  $\mu$ & fluid's coefficient of dynamic viscosity [kg/m/s] \\
  $\gamma$ & fluid's true density [$\mathrm{kg}/\mathrm{m}^3$] \\
  $\mathbf{K}_{1}(\mathbf{x})$ & permeability tensor field of the macro pore-network [$\mathrm{m}^2$] \\
  $\mathbf{K}_{2}(\mathbf{x})$ & permeability tensor field of the micro pore-network [$\mathrm{m}^2$] \\
  %%%%%
  \hline \multicolumn{2}{c}{\emph{Other symbols and abbreviations}} \\
  \hline
  $\mathbb{R}^{3}$ & three-dimensional Euclidean space \\
  DPP & double porosity/permeability \\
  IBVP & initial boundary value problem \\
  PDE & partial differential equation \\
  \hline
  %%%%%
  % \end{tabular}
\end{longtable}

\vspace{-0.3in}

%*********************************************;
%                                             ;
%  NAME                                       ;
%    S1_Backwards_Intro.tex                   ;
%                                             ;
%  WRITTEN BY                                 ;
%    Kalyana B. Nakshatrala                   ;
%                                             ;
%*********************************************;
\section{INTRODUCTION AND MOTIVATION}
\label{Sec:S1_Backwards_Intro}

\lettrine[findent=2pt]{\fbox{\textbf{I}}}{t is not hyperbole to say every material is porous}---when looked at with a sufficient spatial resolution. Equally striking: materials that are porous at the continuum scale ($> 10^{-6}$ m) are also ubiquitous, ranging from biological materials (e.g., bone, tissue), geological materials (e.g., shale,  clays) to synthetic materials (e.g., surgical masks, water filters). Amongst a myriad of processes possible in porous materials, flow through porous media is prominent. This transport phenomenon enables several functionalities in living organisms (e.g., waste removal in kidneys), and it is vital to many technological pursuits (e.g., masks for protection against pollutants and pathogens, and extraction of hydrocarbons from the subsurface, to name a few). 

There is an outpouring of interest in understanding how fluids flow in porous materials comprising \emph{two or more} distinct pore networks. This gush of activity is for four primary reasons:
\begin{enumerate}
\item The success of cutting-edge initiatives, such as extracting hydrocarbons from tight shale (a double-pored geological material), depends on controlling the transport of fluids through two distinct pore structures.
\item Recent advances in imaging techniques offer an extraordinary resolution to study transport in biological materials (e.g., bone), which are inherently hierarchical, thereby facilitating potentially new therapeutic remedies.
\item New synthetic materials and devices, such as facial masks and water purification filters, use distinct but interconnected pore networks to remove impurities (e.g., bacteria, dust, particulates, and heavy metals). 
\item Lastly, additive manufacturing now allows the creation of unprecedented hierarchical porous structures.
\end{enumerate}

But fluid flow in double-pored materials is intricate. The 
classical Darcy equations---often used in studies on flow 
through porous media---cannot adequately capture such complex 
flow behaviors; Darcy's model relates the discharge velocity 
to the pressure gradient and assumes a single pore network. 
Consequently, double porosity/permeability (DPP) models have 
appeared that account for flow within different pore networks 
as well as mass transfer across them. The literature often 
attributes \citet{barenblatt1960basic} as the first work 
on DPP. After that, many researchers have developed various 
DPP models with varying complexity, which include \citep{chen1989transient,Arbogast_Douglas_Hornung_1990,Dykhuizen_v26_p351_1990_WRR,Boutin_Royer_2015,Borja_Koliji_2009,Choo_White_Borja_2016_IJG}.  

In this paper, we utilize the model put forth by \citet{nakshatrala2018modeling}, hereafter referred to as \emph{the DPP model}. The authors in the cited reference have derived the associated governing equations by appealing to the theory of interacting continua and the maximization of the rate of dissipation hypothesis---a more stringent form of the second law of thermodynamics. Unsurprisingly, the DPP model is mathematically more complicated than the Darcy equations---comprising four coupled partial differential equations in four field variables. The referenced article also provides analytical solutions for canonical problems and presents various qualitative properties that the \emph{steady-state} solutions of the DPP model satisfy: minimum dissipation theorem, maximum principles, and reciprocal theorem. 

On the temporal front, \citet{nakshatrala2021boundedness} has shown that the solutions under the DPP model are Lyapunov stable---small changes to the initial conditions give rise to nearby solution trajectories over time. Nevertheless, the literature offers limited results for the DPP model in the transient regime compared to the steady state. In a quest to fill this knowledge gap, the principal aim of this paper is to present three qualitative properties that the DPP model satisfies in the transient regime:  backward-in-time uniqueness, a reciprocal relation, and a variational principle. To bring out the import of our work, we elaborate on how such properties arise in studying other popular time-dependent mathematical models from mechanics and applied mathematics. 

 ``Backward-in-time'' solutions arise in mathematical analysis, numerical modeling (e.g., inverse problems), and practical applications (e.g., control systems). Thus, understanding these solutions' behavior (e.g., uniqueness) is essential for studying time-dependent partial differential equations (PDEs), often governed by parabolic and hyperbolic equations \citep{evans2002partial}. Analysis of backward-in-time solutions addresses the following question: if we know the solution of a PDE at a later instance, what can we infer about the behavior of the solution at earlier times? However, one should not confuse the \emph{backward-in-time solution} with the \emph{solution to the backward problem}, which too appears often in mathematical analysis. 
 
To elucidate, we briefly discuss the forward and backward problems in the study of partial differential equations. A \emph{forward problem} is an initial boundary value problem (IBVP) with a prescribed initial condition, prescribed boundary conditions, and a system of partial differential equations defined in terms of unknowns. A \emph{solution to the forward problem} provides values to the unknown variables for later instances (i.e., beyond the time at which the initial condition is prescribed), satisfying all the governing equations of the forward problem; see \textbf{Fig. ~\ref{Fig1:Backwards_solutions}}. On the other hand, the \emph{backward problem} comprises partial differential equations and boundary conditions defined for earlier times alongside a prescribed value for the solution later. Patently, a \emph{solution to the backward problem} solves the backward problem. One can obtain the governing equations of a backward problem corresponding to a forward problem by reversing the direction of time (i.e., replacing $t$ with $-t$). For second-order hyperbolic equations (such as the wave equation), the well-posedness of the forward problem manifestly implies that the backward problem is also well-posed; the structural forms of forward and backward problems are identical, as the forward problem is invariant under time reversal for wave equations. However, for parabolic equations, which is the case in this paper, the backward problem---not identical to the forward problem---is ill-posed. Solutions to the backward problem might not exist; even if a solution exists, it might not continuously depend on the prescribed input data and can blow up in a finite time \citep{pao2012nonlinear}.

Contrasting with a backward problem, the equations that govern a backward-in-time solution are still that of the forward problem. The backward-in-time \emph{uniqueness} asserts that if two solutions satisfying a forward problem match at an instance of time, then these two solutions must coincide for all prior times. Proving backward-in-time uniqueness for second-order hyperbolic equations is trivially provided by the uniqueness of the backward problem. One cannot use a similar argument for parabolic equations, as the associated backward problem is ill-posed. So, backward-in-time uniqueness for first-order transient systems and parabolic equations is a subtle and surprising property, as echoed by \citet{evans2002partial}, ``This is not at all obvious.'' 
Heat equation, a well-known first-order transient system, possesses the backward-in-time uniqueness property. However, all first-order systems are not guaranteed to have such a property. Heeding to the ever-growing interest in inverse problems concerning flow through porous media, we show the DPP model also has backward-in-time uniqueness; we avail an \emph{energy method} to establish the stated result. 

``Reciprocal relations'' enjoy a vibrant history in mechanics---especially in solid mechanics and structural analysis. In 1864, \textsc{James Clerk Maxwell} proposed a reciprocal relationship in the study of structural frames, showing that the flexibility matrix is symmetric \citep{maxwell1864calculation}. Later, in  1872, \textsc{Enrico Betti}---an Italian mathematician and the eponym of Betti's theorem, also referred to as Betti's relations---established reciprocity in \emph{elastostatics} \citep{betti1872teoria}. For a while, a common belief was that reciprocity was pertinent to linear problems until \textsc{Clifford Truesdell} showed that even (nonlinear) hyperelasticity possesses a reciprocal relation and established the equivalence between reciprocity and the existence of a stored energy functional \citep{truesdell1963meaning}.  

For time-dependent mechanics problems, \textsc{Lord Rayleigh} (originally called \textsc{John William Strutt}) posed a reciprocal relation for vibrations \citep{strutt1873some}. \textsc{Dario Graffi} gave the modern version of reciprocal relations for \emph{elastodynamics} \citep{graffi1946sul}. \textsc{Adrianus Teunis de Hoop} extended such relations to linear viscoelastic models \citep{de1966elastodynamic}; also refer to his handbook \citep{de1995handbook}.  A comprehensive discussion of reciprocity in elastic solids can be found in \citep{gurtin1973linear,achenbach2003reciprocity,achenbach2006reciprocity}. But reciprocity is a unique property and should not be taken for granted. Non-reciprocity can occur even in elastodynamics, for example, see \citep{blanchard2018non}.

Besides academic significance, Betti's theorem has several practical utilities: it enables solving seemingly complicated practical problems involving concentrated loads, contact problems, and punch problems \citep{mossakovskii1953application,moore2020extending}. Researchers have also utilized reciprocal relations under the boundary element method to obtain numerical solutions \citep{aliabadi2020boundary,beskos1987boundary,panagiotopoulos2011three}. Beyond solid mechanics, \citet{shabouei2016mechanics} have extended reciprocity to flow through porous media problems; they developed Betti-type reciprocal relations for Darcy and Darcy-Brinkman equations and used those relations as \emph{a posteriori} measures to assess the accuracy of numerical solutions. Given the theoretical and practical import of reciprocity, it is desirable to have a similar result under the dynamic DPP model. Ergo, we establish one such by availing \emph{Cauchy-Riemann convolutions}.

``Variational principles'' have been undoubtedly instrumental in advancing mechanics and physics \citep{lanczos1986variational}. For instance, in solid mechanics, the principles of minimum potential energy and minimum complementary energy have been central to the progress of elasticity and plasticity \citep{washizu1968variational}, early developments in the finite element method \citep{Shames_Dym}, and efforts to bound material constants \citep{huet1990application}. \textsc{Warner Koiter} aptly used the energy method---a variational technique---to advance the field of elastic stability \citep{koiter1967stability}. Variational principles are not just limited to quasi-static or steady-state problems. Two famous variational approaches---the Lagrangian and the Hamiltonian formalisms---led to significant developments in studying time-dependent phenomena in areas such as classical mechanics \citep{deriglazov2016classical}, dynamics \citep{rosenberg1977analytical}, and quantum mechanics \citep{griffiths2018introduction}. 

However, the mentioned formalisms do not apply to first-order transient systems. Noteworthy, there was a time when it was believed that variational principles could not exist for mathematical models involving first-order (or any odd-numbered order) time derivatives, as the resulting governing equations would not be self-adjoint \citep{washizu1968variational}. Surprisingly, \textsc{Morton Gurtin} showed otherwise; he constructed a variational principle for the classical heat equation (a first-order time-dependent mathematical model described using a scalar field variable) using convolutions \citep{gurtin1964variational}. Noting the standing of variational principles in mechanics, we pose a variational principle for the DPP model. We employ the technique put forth by \textsc{Gurtin}; however, we make several modifications, as our system comprises a coupled system of PDEs with four independent field variables and accounts for fluid's incompressibility---an internal constraint. 

The three results presented in this paper will further elevate the double porosity/permeability (DPP) model's status in describing fluid flow in porous media exhibiting distinct pore networks. The plan for the rest of this paper is as follows. We first outline the governing equations: an initial boundary value problem arising from the DPP model (\S\ref{Sec:S2_Backwards_GE}). We then prove the backward-in-time uniqueness property (\S\ref{Sec:S3_Backwards_Uniqueness}), followed by a reciprocal theorem (\S\ref{Sec:S4_Backwards_Reciprocity}). Next, we present a variational principle for the DPP model (\S\ref{Sec:S5_Backwards_Variational}). Finally, we draw conclusions alongside a brief discussion of possible future works (\S\ref{Sec:S6_Backwards_Closure}).

%--------------------------------------------;
%  Figure 1: Forward and backward solutions  ;
%--------------------------------------------;
\begin{figure}
    \includegraphics[scale=2]{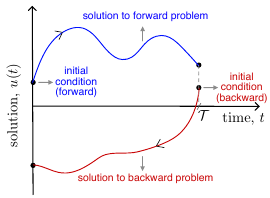}
    \caption{Solutions to the forward and backward problems. The \emph{backward-in-time solution}, one of the central topics of this paper, is not necessarily a solution to the backward problem. \label{Fig1:Backwards_solutions}}
\end{figure}

%*********************************************;
%                                             ;
%  NAME                                       ;
%    S2_Backwards_GE.tex                      ;
%                                             ;
%  WRITTEN BY                                 ;
%    Kalyana B. Nakshatrala                   ;
%                                             ;
%*********************************************;
\section{TRANSIENT DOUBLE POROSITY/PERMEABILITY (DPP) MODEL}
\label{Sec:S2_Backwards_GE}

Consider a porous domain $\Omega$ comprising two distinct pore networks. The porous skeleton is rigid; thus, the deformation of the porous solid is negligible. An incompressible fluid flows through the porous medium with seepage in both pore networks and possibly exchange mass between the networks. Henceforth, we refer to the two networks as ``macro'' and ``micro.'' To distinguish easily, we use the subscripts ``1'' and ``2'' on the quantities related to macro- and micro-networks, respectively. 

For technical reasons, we assume the domain to be an open set---bounded by a boundary $\partial \Omega = \overline{\Omega} \setminus \Omega$, in which the overline represents the set closure \citep{evans2002partial}. $\mathbf{x} \in \overline{\Omega}$ denotes a spatial point, and $t \in [0,\mathcal{T}]$ symbolizes the time, with $\mathcal{T} < +\infty$ representing the length of the time interval of interest. $\partial(\cdot)/\partial t$ is the partial time derivative, while $\mathrm{grad}[\cdot]$ and $\mathrm{div}[\cdot]$ stand for the spatial gradient and divergence operators, respectively. $\widehat{\mathbf{n}}(\mathbf{x})$ depicts the unit outward normal to the boundary.

For the fluid, $\gamma$ and $\mu$ denote the true density and the coefficient of dynamic viscosity, respectively. $\beta$ represents the coefficient of mass transfer between the pore networks---a dimensionless characteristic of a double-pored medium. Each pore network has inherent hydrodynamical properties governing fluid flow through it. $\phi_1(\mathbf{x})$ and $\phi_2(\mathbf{x})$ denote the porosity fields for the macro- and micro-networks, respectively;  since the porous solid is rigid, the porosity fields are independent of time. $\mathbf{K}_1(\mathbf{x})$ and $\mathbf{K}_2(\mathbf{x})$ represent the permeability fields for the macro- and micro-networks, respectively. The permeabilities are symmetric and positive-definite tensor fields---and hence invertible. $\mathbf{u}_1(\mathbf{x},t)$ and $\mathbf{u}_2(\mathbf{x},t)$ are the Darcy velocities of the fluid---referred to as the \emph{macro-velocity} and the \emph{micro-velocity}, respectively. $p_1(\mathbf{x},t)$ and $p_2(\mathbf{x},t)$ are the fluid's pressures in the macro- and micro-networks: the \emph{macro-pressure} and the \emph{micro-pressure}, respectively.

For the macro-network, the boundary is divided into two complementary parts: $\Gamma_{1}^{u}$ and $\Gamma_{1}^{p}$. $\Gamma_{1}^{u}$ is part of the boundary on which the normal component of the macro-velocity, $u_{n1}(\mathbf{x},t)$, is prescribed. $\Gamma_{1}^{p}$ denotes that part of the boundary on which the macro-pressure, $p_{\mathrm{p}1}(\mathbf{x},t)$, is prescribed. For mathematical well-posedness, we require the following complementary conditions to hold for these boundary partitions:
%-------------------------------------;
%  Equation: Split of macro boundary  ;
%-------------------------------------;
\begin{align}
    \label{Eqn:Backwards_macro_gamma_partition}
    \Gamma_{1}^{u} \cup \Gamma_{1}^{p} = \partial \Omega 
    \quad \mathrm{and} \quad 
    \Gamma_{1}^{u} \cap \Gamma_{1}^{p} = \emptyset
\end{align}
In like manner, $\Gamma_{2}^{u}$ and $\Gamma_{2}^{p}$ denote the parts of the boundary on which the normal component of the micro-velocity, $u_{n2}(\mathbf{x},t)$, and micro-pressure, $p_{\mathrm{p}2}(\mathbf{x},t)$, are prescribed, respectively. Correspondingly, we require:  
%-------------------------------------;
%  Equation: Split of micro boundary  ;
%-------------------------------------;
\begin{align}
    \label{Eqn:Backwards_micro_gamma_partition}
    \Gamma_{2}^{u} \cup \Gamma_{2}^{p} = \partial \Omega 
    \quad \mathrm{and} \quad 
    \Gamma_{2}^{u} \cap \Gamma_{2}^{p} = \emptyset
\end{align}

The governing equations describing the flow of an incompressible fluid in a double-pored medium take the following form: 
%----------------------;
%  Equation: DPP IBVP  ;
%----------------------;
\begin{subequations}
\label{Eqn:Backwards_DPP_GE}
\begin{alignat}{2}
    %%%
    \label{Eqn:Backwards_BoLM_1}
    &\frac{\gamma}{\phi_1(\mathbf{x})}  \frac{\partial \mathbf{u}_{1}(\mathbf{x},t)}{\partial t} 
    +  \mu \, \mathbf{K}_{1}^{-1}(\mathbf{x}) \, \mathbf{u}_{1}(\mathbf{x},t) + \mathrm{grad}\big[p_1(\mathbf{x},t)\big] = \gamma \, \mathbf{b}_1(\mathbf{x},t) 
    &&  \qquad \mathrm{in} \; \; \Omega \times (0,\mathcal{T}] \\
    %%%
    \label{Eqn:Backwards_BoLM_2}
    &\frac{\gamma}{\phi_2(\mathbf{x})}  \frac{\partial \mathbf{u}_{2}(\mathbf{x},t)}{\partial t} 
    +  \mu \, \mathbf{K}_{2}^{-1}(\mathbf{x}) \, \mathbf{u}_{2}(\mathbf{x},t) + \mathrm{grad}\big[p_2(\mathbf{x},t)\big] = \gamma \, \mathbf{b}_2(\mathbf{x},t) 
    &&  \qquad \mathrm{in} \; \; \Omega \times (0,\mathcal{T}] \\
    %%%
    \label{Eqn:Backwards_BoM_1}
    &\mathrm{div}\big[\mathbf{u}_{1}(\mathbf{x},t)\big] 
    = -\frac{\beta}{\mu} \big(p_1(\mathbf{x},t) - p_2(\mathbf{x},t)\big)
    &&  \qquad \mathrm{in} \; \; \Omega \times (0,\mathcal{T}] \\
    %%%
    \label{Eqn:Backwards_BoM_2}
    &\mathrm{div}\big[\mathbf{u}_{2}(\mathbf{x},t)\big] 
    = +\frac{\beta}{\mu} \big(p_1(\mathbf{x},t) - p_2(\mathbf{x},t)\big)
    &&  \qquad \mathrm{in} \; \; \Omega \times (0,\mathcal{T}] \\
    %%%
    \label{Eqn:Backwards_vBC_1}
    &\mathbf{u}_1(\mathbf{x},t) \bullet \widehat{\mathbf{n}}(\mathbf{x}) = u_{n1}(\mathbf{x},t) 
    && \qquad \mathrm{on} \; \; \Gamma^{u}_{1} \times [0,\mathcal{T}] \\
    %%%
    \label{Eqn:Backwards_vBC_2}
    &\mathbf{u}_2(\mathbf{x},t) \bullet \widehat{\mathbf{n}}(\mathbf{x}) = u_{n2}(\mathbf{x},t) 
    && \qquad \mathrm{on} \; \; \Gamma^{u}_{2} \times [0,\mathcal{T}] \\
    %%%
    \label{Eqn:Backwards_pBC_1}
    &p_1(\mathbf{x},t) = p_{\mathrm{p}1}(\mathbf{x},t) 
    && \qquad \mathrm{on} \; \; \Gamma^{p}_{1} \times [0,\mathcal{T}] \\
    %%%
    \label{Eqn:Backwards_pBC_2}
    &p_2(\mathbf{x},t) = p_{\mathrm{p}2}(\mathbf{x},t) 
    && \qquad \mathrm{on} \; \; \Gamma^{p}_{2} \times [0,\mathcal{T}] \\
    %%%
    \label{Eqn:Backwards_vIC_1}
    &\mathbf{u}_1(\mathbf{x},t=0) = \mathbf{u}_{01}(\mathbf{x}) 
    && \qquad \mathrm{in} \; \; \Omega \\
    %%%
    \label{Eqn:Backwards_vIC_2}
    &\mathbf{u}_2(\mathbf{x},t=0) = \mathbf{u}_{02}(\mathbf{x}) 
    && \qquad \mathrm{in} \; \; \Omega 
    %%%
\end{alignat}
\end{subequations}
where $\bullet$ depicts the dot product between two vectors, $\mathbf{u}_{01}(\mathbf{x})$ and $\mathbf{u}_{01}(\mathbf{x})$ denote the prescribed initial velocities, and $\mathbf{b}_{1}(\mathbf{x},t)$ and $\mathbf{b}_{2}(\mathbf{x},t)$ are the specific body forces appertaining to the macro- and micro-networks, respectively. To clarify, $\gamma \, \mathbf{b}_1(\mathbf{x},t)$ is the ``apparent'' body force density---the quotient of the body force per unit volume of the porous medium---acting on the fluid in the macro-network. A similar interpretation holds for $\gamma \, \mathbf{b}_2(\mathbf{x},t)$.

Equations \eqref{Eqn:Backwards_BoLM_1} and \eqref{Eqn:Backwards_BoLM_2} represent the balance of linear momentum for the macro- and micro-networks, respectively. On the other hand, Eqs.~\eqref{Eqn:Backwards_BoM_1} and \eqref{Eqn:Backwards_BoM_2} depict the mass balance for the two networks, accounting for the inter-network mass exchange. We direct the reader to \citep{nakshatrala2018modeling} for a systematic mathematical derivation of the above governing equations.

%==========================================;
%  Subsection: Mathematical preliminaries  ;
%==========================================;
\subsection{Mathematical preliminaries}
The principal results presented in forthcoming sections avail the properties of ``convolutions.'' Thus, we record a few concomitant definitions and properties that we resort to later in this paper. 

Consider two time-dependent scalar fields: $f(\mathbf{x},t)$ and $g(\mathbf{x},t)$, and two time-dependent vector fields: $\mathbf{f}(\mathbf{x},t)$ and $\mathbf{g}(\mathbf{x},t)$. For these fields, one can define the following three forms of Cauchy-Riemann convolutions: 
%---------------------------------------;
%  Equation: CR convolution definition  ;
%---------------------------------------;
\begin{align}
    %%%
    \label{Eqn:Backwards_CR_type_1}
    &[f \star g](\mathbf{x},t)
    := \int_{0}^{t} f(\mathbf{x},t  - \tau) 
    \, g(\mathbf{x},\tau) \, \mathrm{d} \tau \\
    %%%
    \label{Eqn:Backwards_CR_type_2}
    &[f \star \mathbf{f}](\mathbf{x},t)
    := \int_{0}^{t} f(\mathbf{x},t  - \tau) 
    \, \mathbf{f}(\mathbf{x},\tau) \, \mathrm{d} \tau \\
    %%%
    \label{Eqn:Backwards_CR_type_3}
    &[\mathbf{f} \star \mathbf{g}](\mathbf{x},t)
    := \int_{0}^{t} \mathbf{f}(\mathbf{x},t  - \tau) 
    \bullet \mathbf{g}(\mathbf{x},\tau) \, \mathrm{d} 
    \tau 
    = \int_{0}^{t} 
    f_{i}(\mathbf{x},t - \tau) \, g_{i}(\mathbf{x},\tau) \; \mathrm{d} \tau 
\end{align}
in which $f_i$ denotes the $i$-th component of the vector field $\mathbf{f}$ and, likewise, for $g_i$. Also, in writing the last expression, we have invoked Einstein's summation notation: a repeated index implies summation on the index. The first form \eqref{Eqn:Backwards_CR_type_1} is a convolution between two scalar fields, the second form \eqref{Eqn:Backwards_CR_type_2} between a scalar and a vector fields, and the third form \eqref{Eqn:Backwards_CR_type_3} between two vector fields. Note that the convolution between two vector fields is a scalar field (viz. Eq.~\eqref{Eqn:Backwards_CR_type_3}). Whenever there is no confusion, we drop the arguments and square brackets and denote the above convolutions simply by $f \star g$, $f \star \mathbf{f}$, and $\mathbf{f} \star \mathbf{g}$. 
 
We also utilize the following particular case of the second form \eqref{Eqn:Backwards_CR_type_2}:
%---------------------------------;
%  Equation: Special convolution  ;
%---------------------------------;
\begin{align}
    &1 \star \mathbf{f} = \int_{0}^{t} 
    \mathbf{f}(\mathbf{x},\tau) \, \mathrm{d}\tau
\end{align}
Using Leibniz integral rule (e.g., see \citep{flanders1973differentiation}), it is easy to check that  
%-----------------------------------;
%  Equation: Leibniz integral rule  ;
%-----------------------------------;
\begin{align}
    \label{Eqn:Backwards_convolutions_Leibniz_rule}
    \frac{\partial (1 \star \mathbf{f})}{\partial t} 
    =  \mathbf{f}(\mathbf{x},t) 
\end{align}

Cauchy-Riemann convolutions satisfy the following algebraic properties: 
%----------------------------------;
%  Equation: Algebraic properties  ;
%----------------------------------;
\begin{alignat}{2}
    \label{Eqn:Backwards_commutative}
    &\mathbf{f} \star \mathbf{g} 
    = \mathbf{g} \star \mathbf{f} 
    &&\quad \mbox{[commutative property]} \\
    %%%
    \label{Eqn:Backwards_associative}
    &(\mathbf{f} \star \mathbf{g}) \star \mathbf{h} 
    = \mathbf{f} \star (\mathbf{g} \star \mathbf{h}) 
    &&\quad \mbox{[associative property]} 
\end{alignat}
in which $\mathbf{h}(\mathbf{x},t)$ is a time-dependent vector field. For further details, see \citep{mikusinski2014operational}.

This article also uses the following identities concerning time derivatives applied to convolutions:
%----------------------------------------;
%  Equation: Time derivative properties  ;
%----------------------------------------;
\begin{align}
    \label{Eqn:Backwards_f_star_dgdt}
    & \mathbf{f} \star \frac{\partial \mathbf{g}}{\partial t}
    - \frac{\partial \mathbf{f}}{\partial t} \star \mathbf{g} 
    = \mathbf{f}(\mathbf{x},0) \bullet \mathbf{g}(\mathbf{x},t) 
    - \mathbf{f}(\mathbf{x},t) \bullet \mathbf{g}(\mathbf{x},0)
    \\
    %%%
    \label{Eqn:Backwards_df_star_g_dt}
    &\frac{\partial (\mathbf{f} \star \mathbf{g})}{\partial t}
    = \mathbf{f}(\mathbf{x},0) \bullet \mathbf{g}(\mathbf{x},t) + \frac{\partial \mathbf{f}}{\partial t} \star \mathbf{g} 
    = \mathbf{f}(\mathbf{x},t) \bullet \mathbf{g}(\mathbf{x},0) + \mathbf{f} \star  \frac{\partial \mathbf{g}}{\partial t}
    %%%
\end{align}
We have the following (spatial) product rule involving convolutions: 
%-------------------------------------------;
%  Equation: Product rule for convolutions  ;
%-------------------------------------------;
\begin{align}
\label{Eqn:Backwards_CR_spatial_product_rule}
\mathbf{f} \star \mathrm{grad}[f] = \mathrm{div}[\mathbf{f} \star f] - \mathrm{div}[\mathbf{f}] \star f 
\end{align}
In indicial notation, the above identity takes the following expanded form:
%---------------------------;
%  Equation: Expanded form  ;
%---------------------------;
\begin{align}
\int_{0}^{t} 
f_{i}(\mathbf{x},t-\tau) \, \frac{\partial f(\mathbf{x},\tau)}{\partial x_i} \, \mathrm{d} \tau 
= \int_{0}^{t} \frac{\partial \big(f_{i}(\mathbf{x},t 
- \tau) \, f(\mathbf{x},\tau)\big)}{\partial x_i} \, \mathrm{d} \tau 
- \int_{0}^{t} \frac{\partial f_{i}(\mathbf{x},t - \tau)}{\partial x_i} \, f(\mathbf{x},\tau) \, 
\mathrm{d} \tau 
\end{align}
in which $x_i$ denotes the $i$-th component of the spatial vector $\mathbf{x}$. We have again availed Einstein's summation notation in writing the above equation. Mathematical identity \eqref{Eqn:Backwards_CR_spatial_product_rule} implies the following integral relationship as a consequence of the divergence theorem:
%------------------------------;
%  Equation: Green's identity  ;
%------------------------------;
\begin{align}
\label{Eqn:Backwards_Greens_identity}
\int_{\Omega} \mathbf{f} \star \mathrm{grad}[f] \; \mathrm{d} \Omega 
= \int_{\partial \Omega} \Big(\mathbf{f} \bullet \widehat{\mathbf{n}}(\mathbf{x})\Big) \star f \; \mathrm{d} \Gamma 
- \int_{\Omega} \mathrm{div}[\mathbf{f}] \star f 
\; \mathrm{d} \Omega 
\end{align}

%*********************************************;
%                                             ;
%  NAME                                       ;
%    S3_Backwards_Uniqueness.tex              ;
%                                             ;
%  WRITTEN BY                                 ;
%    Kalyana B. Nakshatrala                   ;
%                                             ;
%*********************************************;
\section{BACKWARD-IN-TIME UNIQUENESS}
\label{Sec:S3_Backwards_Uniqueness}

We pose this section's central question as follows. Consider two solutions that satisfy the governing equations except for the initial velocity conditions: the two solutions meet Eqs.~\eqref{Eqn:Backwards_BoLM_1}--\eqref{Eqn:Backwards_pBC_2}; however, nothing is said about conforming to the initial velocity conditions \eqref{Eqn:Backwards_vIC_1} and \eqref{Eqn:Backwards_vIC_2}. If the two sets of velocities match (i.e., the macro-velocity of the first solution coincides with that of the second, and likewise, for the micro-velocity) at some instance of time, could the two solutions differ at prior times? The ``backward-in-time uniqueness'' theorem---proven below---asserts that the answer to the posed question is \emph{negative}.

%================================;
%  Theorem: Backward uniqueness  ;
%--------------------------------;
\begin{theorem}[Backward-in-time uniqueness] 
\label{Thm:Backwards_Backward_uniqueness_theorem}
Let 
%---------------------------;
%  Equation: Two solutions  ;
%---------------------------;
\[
\Big\{\mathbf{u}_1^{(1)}(\mathbf{x},t),\mathbf{u}_2^{(1)}(\mathbf{x},t),p_1^{(1)}(\mathbf{x},t),p_2^{(1)}(\mathbf{x},t) \Big\}
\quad \mathrm{and} \quad
\Big\{\mathbf{u}_1^{(2)}(\mathbf{x},t),\mathbf{u}_2^{(2)}(\mathbf{x},t),p_1^{(2)}(\mathbf{x},t),p_2^{(2)}(\mathbf{x},t) \Big\}
\] 
denote two solutions that satisfy Eqs.~\eqref{Eqn:Backwards_BoLM_1}--\eqref{Eqn:Backwards_pBC_2} and not necessarily the initial velocity conditions \eqref{Eqn:Backwards_vIC_1}--\eqref{Eqn:Backwards_vIC_2}.
If, at some instance of time $\mathcal{T} > 0$, we have 
%-----------------------------------;
%  Equation: Theorem's condition 1  ;
%-----------------------------------;
\begin{align}
\label{Eqn:Backwards_theorem_condition_1}
\mathbf{u}_{1}^{(1)}(\mathbf{x},\mathcal{T}) = \mathbf{u}_{1}^{(2)}(\mathbf{x},\mathcal{T}) 
\quad \mathrm{and} \quad 
\mathbf{u}_{2}^{(1)}(\mathbf{x},\mathcal{T}) = \mathbf{u}_{2}^{(2)}(\mathbf{x},\mathcal{T}) 
\quad \forall \mathbf{x} \in \Omega 
\end{align} 
then the velocities under the two solutions coincide at all prior times: 
%---------------------------------;
%  Equation: Velocity uniqueness  ;
%---------------------------------;
\begin{align}
\mathbf{u}_{1}^{(1)}(\mathbf{x},t) = \mathbf{u}_{1}^{(2)}(\mathbf{x},t) 
\quad \mathrm{and} \quad \mathbf{u}_{2}^{(1)}(\mathbf{x},t) = \mathbf{u}_{2}^{(2)}(\mathbf{x},t) 
\quad \forall\mathbf{x}\in\Omega, \forall t \in [0,\mathcal{T})
\end{align}
Moreover, the macro- and micro-pressures under the two solutions match, albeit up to an arbitrary constant: 
%---------------------------------;
%  Equation: Pressure uniqueness  ;
%---------------------------------;
\begin{align}
    p_1^{(1)}(\mathbf{x},t) = p_1^{(2)}(\mathbf{x},t) + C  
    \quad \mathrm{and} \quad 
    p_2^{(1)}(\mathbf{x},t) = p_2^{(2)}(\mathbf{x},t) + C 
    \quad \forall \mathbf{x} \in \Omega, \forall t \in [0,\mathcal{T})
\end{align}
in which $C$ is a constant. Further, if either $\Gamma_1^{p} \neq \emptyset$ or $\Gamma_2^{p} \neq \emptyset$, then $C = 0$.
\end{theorem}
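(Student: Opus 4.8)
The plan is to use the \emph{energy method} built around the physical kinetic energy and to upgrade it to backward uniqueness by a \emph{logarithmic-convexity} argument. By linearity of Eqs.~\eqref{Eqn:Backwards_BoLM_1}--\eqref{Eqn:Backwards_pBC_2}, the difference fields $\mathbf{w}_i := \mathbf{u}_i^{(1)} - \mathbf{u}_i^{(2)}$ and $q_i := p_i^{(1)} - p_i^{(2)}$ ($i=1,2$) satisfy the \emph{homogeneous} system: the body forces cancel, the prescribed boundary data cancel (so $\mathbf{w}_i \bullet \widehat{\mathbf{n}} = 0$ on $\Gamma_i^{u}$ and $q_i = 0$ on $\Gamma_i^{p}$), and the hypothesis \eqref{Eqn:Backwards_theorem_condition_1} becomes $\mathbf{w}_1(\mathbf{x},\mathcal{T}) = \mathbf{w}_2(\mathbf{x},\mathcal{T}) = \mathbf{0}$. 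Assuming the solutions are regular enough to differentiate twice in time, it suffices to prove that the energy $E(t) := \tfrac{1}{2}\int_{\Omega}\big(\tfrac{\gamma}{\phi_1}\,\mathbf{w}_1 \bullet \mathbf{w}_1 + \tfrac{\gamma}{\phi_2}\,\mathbf{w}_2 \bullet \mathbf{w}_2\big)\,\mathrm{d}\Omega$ vanishes identically on $[0,\mathcal{T}]$.

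First I would compute $\dot{E}(t)$. Substituting the homogeneous momentum balances for $\tfrac{\gamma}{\phi_i}\,\partial\mathbf{w}_i/\partial t$ and integrating the pressure-gradient terms by parts, the boundary integrals drop out because $\Gamma_i^{u}$ and $\Gamma_i^{p}$ partition $\partial\Omega$ (Eqs.~\eqref{Eqn:Backwards_macro_gamma_partition}--\eqref{Eqn:Backwards_micro_gamma_partition}). Invoking the mass balances \eqref{Eqn:Backwards_BoM_1}--\eqref{Eqn:Backwards_BoM_2}, the two coupling terms combine into a single sign-definite contribution, giving $\dot{E} = -\,\mu\!\int_{\Omega}\big(\mathbf{w}_1 \bullet \mathbf{K}_1^{-1}\mathbf{w}_1 + \mathbf{w}_2 \bullet \mathbf{K}_2^{-1}\mathbf{w}_2\big)\,\mathrm{d}\Omega - \tfrac{\beta}{\mu}\!\int_{\Omega}(q_1 - q_2)^2\,\mathrm{d}\Omega \le 0$, since $\mathbf{K}_i^{-1}$ is positive-definite and $\beta \ge 0$. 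Thus $E$ is non-increasing; this monotonicity is what later confines the zero set of $E$ to a left-interval.

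The crux is the second derivative. Writing $\langle \mathbf{a},\mathbf{b}\rangle := \int_{\Omega}\big(\tfrac{\gamma}{\phi_1}\mathbf{a}_1 \bullet \mathbf{b}_1 + \tfrac{\gamma}{\phi_2}\mathbf{a}_2 \bullet \mathbf{b}_2\big)\,\mathrm{d}\Omega$, one has $\dot{E} = \langle \mathbf{w},\dot{\mathbf{w}}\rangle$ and $\ddot{E} = \langle \dot{\mathbf{w}},\dot{\mathbf{w}}\rangle + \langle \mathbf{w},\ddot{\mathbf{w}}\rangle$. The main obstacle is the identity $\langle \mathbf{w},\ddot{\mathbf{w}}\rangle = \langle \dot{\mathbf{w}},\dot{\mathbf{w}}\rangle$, which renders the spatial operator effectively self-adjoint and yields $\ddot{E} = 2\langle \dot{\mathbf{w}},\dot{\mathbf{w}}\rangle$. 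Establishing it needs (i) the symmetry of $\mathbf{K}_i^{-1}$, so the Darcy terms $\int \mathbf{w}_i \bullet \mathbf{K}_i^{-1}\dot{\mathbf{w}}_i$ are invariant under $\mathbf{w}_i \leftrightarrow \dot{\mathbf{w}}_i$; and (ii) a delicate cancellation of the pressure terms, where integrating by parts and using the mass balances together with their time derivatives reduces both $\int \mathbf{w}_i \bullet \mathrm{grad}[\dot{q}_i]$ and $\int \dot{\mathbf{w}}_i \bullet \mathrm{grad}[q_i]$ to the same symmetric expression $-\tfrac{\beta}{\mu}\int(q_1-q_2)(\dot{q}_1-\dot{q}_2)\,\mathrm{d}\Omega$. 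The nontrivial point is that the apparently skew mass-transfer coupling (opposite signs in \eqref{Eqn:Backwards_BoM_1}--\eqref{Eqn:Backwards_BoM_2}) is symmetric in the correct bilinear sense; the boundary terms vanish once more by the complementary partition of $\partial\Omega$ together with $q_i = 0$ (hence $\dot{q}_i = 0$) on $\Gamma_i^{p}$ and $\dot{\mathbf{w}}_i \bullet \widehat{\mathbf{n}} = 0$ on $\Gamma_i^{u}$. Granting this, Cauchy--Schwarz gives $\dot{E}^2 = \langle \mathbf{w},\dot{\mathbf{w}}\rangle^2 \le \langle \mathbf{w},\mathbf{w}\rangle\,\langle \dot{\mathbf{w}},\dot{\mathbf{w}}\rangle = 2E\cdot\big(\tfrac{1}{2}\ddot{E}\big)$, i.e.\ the logarithmic-convexity inequality $E\,\ddot{E} \ge \dot{E}^2$, so $\log E$ is convex wherever $E>0$.

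Finally I would close the argument. Since $E$ is continuous, non-increasing, and $E(\mathcal{T})=0$, if $E \not\equiv 0$ its positivity set is an interval $[0,t^{\ast})$ with $t^{\ast}\le \mathcal{T}$ and $E(t)\to 0$ as $t \to t^{\ast-}$; but a finite convex function on $[0,t^{\ast})$ cannot tend to $-\infty$ at the finite right endpoint (its difference quotients from a fixed base point are monotone, hence bounded below), so $\log E \to -\infty$ is impossible, forcing $E \equiv 0$ and thus $\mathbf{w}_1 \equiv \mathbf{w}_2 \equiv \mathbf{0}$ on $[0,\mathcal{T})$. For the pressures, setting $\mathbf{w}_i = \partial\mathbf{w}_i/\partial t = \mathbf{0}$ in the momentum balances gives $\mathrm{grad}[q_i] = \mathbf{0}$, so each $q_i$ is spatially uniform; the mass balances then force $q_1 = q_2 =: C$, and if $\Gamma_1^{p}\neq\emptyset$ or $\Gamma_2^{p}\neq\emptyset$ the condition $q_i = 0$ there pins $C = 0$. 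The step I expect to be hardest---and the heart of the proof---is the $\ddot{E}$ identity, specifically verifying the symmetric cancellation of the pressure/mass-transfer terms; the remainder is a sign computation and the standard convexity endgame.
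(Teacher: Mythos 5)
Your proposal is correct and follows essentially the same route as the paper: the kinetic-energy functional of the difference fields, the logarithmic-convexity inequality $E\,\ddot{E}\ge\dot{E}^2$ obtained from the symmetry of the Darcy and mass-transfer terms, and a convexity contradiction at the vanishing time (the paper phrases this endgame as an exponential lower bound via Gronwall, which is equivalent). The minor cosmetic differences---your single Cauchy--Schwarz in the product inner product replaces the paper's two separate Cauchy--Schwarz applications plus AM--GM, and your symmetric pressure expression carries a harmless sign slip (summing over the two networks yields $+\tfrac{\beta}{\mu}\int_{\Omega}(q_1-q_2)(\dot{q}_1-\dot{q}_2)\,\mathrm{d}\Omega$)---do not affect the argument.
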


We must emphasize two aspects in the above theorem's statement. \emph{First}, the macro- and micro-pressures need not match the corresponding ones across the two solutions at $t = \mathcal{T}$, unlike the requirement for the velocities (i.e., Eq.~\eqref{Eqn:Backwards_theorem_condition_1}). \emph{Second}, the theorem asserts that the solution fields match under two solutions at \emph{all} times before $t = \mathcal{T}$, including the initial velocity conditions. 

We introduce suitable notation and establish a few preparatory propositions before proving the above theorem. A superposed dot denotes the derivative with respect to time. Needless to say, two superposed dots on a quantity imply the time derivative applied twice to the quantity. We also define the following differences between the respective components of the two solutions: 
%-----------------------------------;
%  Equation: Difference quantities  ;
%-----------------------------------;
\begin{subequations}
\begin{align}
    \mathbf{w}_{1}(\mathbf{x},t) 
    &:= \mathbf{u}_{1}^{(1)}(\mathbf{x},t) - 
    \mathbf{u}_{1}^{(2)}(\mathbf{x},t) \\
    \mathbf{w}_{2}(\mathbf{x},t) 
    &:= \mathbf{u}_{2}^{(1)}(\mathbf{x},t) - 
    \mathbf{u}_{2}^{(2)}(\mathbf{x},t) \\
    q_{1}(\mathbf{x},t) 
    &:= p_{1}^{(1)}(\mathbf{x},t) - 
    p_{1}^{(2)}(\mathbf{x},t) \\
    q_{2}(\mathbf{x},t) 
    &:= p_{2}^{(1)}(\mathbf{x},t) - 
    p_{2}^{(2)}(\mathbf{x},t) 
\end{align}
\end{subequations}

Clearly, the quantities $\big\{\mathbf{w}_1(\mathbf{x},t),\mathbf{w}_2(\mathbf{x},t),q_1(\mathbf{x},t),q_2(\mathbf{x},t)\big\}$ satisfy the following boundary value problem (cf. Eqs.~\eqref{Eqn:Backwards_BoLM_1}--\eqref{Eqn:Backwards_pBC_2}): 
%----------------------;
%  Equation: DPP IBVP  ;
%----------------------;
\begin{subequations}
\begin{alignat}{2}
    \label{Eqn:Backwards_wq_BoLM_1}
    &\frac{\gamma}{\phi_1(\mathbf{x})}  \frac{\partial \mathbf{w}_{1}(\mathbf{x},t)}{\partial t} 
    +  \mu \, \mathbf{K}_{1}^{-1}(\mathbf{x}) \, \mathbf{w}_{1}(\mathbf{x},t) + \mathrm{grad}\big[q_1(\mathbf{x},t)\big] = \mathbf{0}
    &&  \qquad \mathrm{in} \; \; \Omega \times (0,\mathcal{T}] \\
    %%%
    \label{Eqn:Backwards_wq_BoLM_2}
    &\frac{\gamma}{\phi_2(\mathbf{x})}  \frac{\partial \mathbf{w}_{2}(\mathbf{x},t)}{\partial t} 
    +  \mu \, \mathbf{K}_{2}^{-1}(\mathbf{x}) \, \mathbf{w}_{2}(\mathbf{x},t) + \mathrm{grad}\big[q_2(\mathbf{x},t)\big] = \mathbf{0}
    &&  \qquad \mathrm{in} \; \; \Omega \times (0,\mathcal{T}] \\
    %%%
    \label{Eqn:Backwards_wq_BoM_1}
    &\mathrm{div}\big[\mathbf{w}_{1}(\mathbf{x},t)\big] 
    = -\frac{\beta}{\mu} \big(q_1(\mathbf{x},t) - q_2(\mathbf{x},t)\big)
    &&  \qquad \mathrm{in} \; \; \Omega \times (0,\mathcal{T}] \\
    %%%
    \label{Eqn:Backwards_wq_BoM_2}
    &\mathrm{div}\big[\mathbf{w}_{2}(\mathbf{x},t)\big] 
    = +\frac{\beta}{\mu} \big(q_1(\mathbf{x},t) - q_2(\mathbf{x},t)\big)
    &&  \qquad \mathrm{in} \; \; \Omega \times (0,\mathcal{T}] \\
    %%%
    \label{Eqn:Backwards_wq_vBC_1}
    &\mathbf{w}_1(\mathbf{x},t) \bullet \widehat{\mathbf{n}}(\mathbf{x}) = 0
    && \qquad \mathrm{on} \; \; \Gamma^{u}_{1} \times [0,\mathcal{T}] \\
    %%%
    \label{Eqn:Backwards_wq_vBC_2}
    &\mathbf{w}_2(\mathbf{x},t) \bullet \widehat{\mathbf{n}}(\mathbf{x}) = 0 
    && \qquad \mathrm{on} \; \; \Gamma^{u}_{2} \times [0,\mathcal{T}] \\
    %%%
    \label{Eqn:Backwards_wq_pBC_1}
    &q_1(\mathbf{x},t) = 0
    && \qquad \mathrm{on} \; \; \Gamma^{p}_{1} \times [0,\mathcal{T}] \\
    %%%
    \label{Eqn:Backwards_wq_pBC_2}
    &q_2(\mathbf{x},t) = 0
    && \qquad \mathrm{on} \; \; \Gamma^{p}_{2} \times [0,\mathcal{T}] 
\end{alignat}
\end{subequations}
Notice that, in the above equations, we did not assert any initial conditions on the differences $\mathbf{w}_1(\mathbf{x},t)$ and $\mathbf{w}_2(\mathbf{x},t)$; stated differently, we did not write conditions similar to Eqs.~\eqref{Eqn:Backwards_vIC_1} and \eqref{Eqn:Backwards_vIC_2}. 

The following function plays a major role in establishing Theorem \ref{Thm:Backwards_Backward_uniqueness_theorem}: 
%--------------------------------;
%  Equation: Definition of E(t)  ;
%--------------------------------;
\begin{align}
    \label{Eqn:Backwards_E_of_t}
    \mathcal{E}(t) 
    := \int_{\Omega} \frac{1}{2} \frac{\gamma}{\phi_1(\mathbf{x})} \mathbf{w}_{1}(\mathbf{x},t) \bullet \mathbf{w}_{1}(\mathbf{x},t) \; \mathrm{d} \Omega 
    + \int_{\Omega} \frac{1}{2} \frac{\gamma}{\phi_2(\mathbf{x})} \mathbf{w}_{2}(\mathbf{x},t) \bullet \mathbf{w}_{2}(\mathbf{x},t) \; \mathrm{d} \Omega 
\end{align}
Since $\gamma$, $\phi_1(\mathbf{x})$, and $\phi_2(\mathbf{x})$ are positive, $\mathcal{E}(t)$ is non-negative. Further, if either $\mathbf{w}_1(\mathbf{x},t)$ or $\mathbf{w}_2(\mathbf{x},t)$ is non-zero, then $\mathcal{E}(t) > 0$. See \S\ref{Subsec:Backwards_Remark_on_E_of_t} below, wherein we remarked about the physical interpretation of $\mathcal{E}(t)$.

%===========================================;
%  Proposition 2: Time derivatives of E(t)  ; 
%-------------------------------------------;
\begin{proposition}
\label{Prop:Backward_Time_deriatives_of_E}
The first and second time derivatives of $\mathcal{E}(t)$, defined in Eq.~\eqref{Eqn:Backwards_E_of_t}, are:
%--------------------------------;
%  Equation: dE/dt and d^2Edt^2  ;
%--------------------------------;
\begin{align}
    %%%
    \label{Eqn:Backwards_dE_by_dt}
    \dot{\mathcal{E}}(t) 
    &= \int_{\Omega} \frac{\gamma}{\phi_1(\mathbf{x})} \frac{\partial \mathbf{w}_{1}(\mathbf{x},t)}{\partial t} \bullet \mathbf{w}_{1}(\mathbf{x},t) \; \mathrm{d} \Omega
    + \int_{\Omega} \frac{\gamma}{\phi_2(\mathbf{x})} \frac{\partial \mathbf{w}_{2}(\mathbf{x},t)}{\partial t} \bullet \mathbf{w}_{2}(\mathbf{x},t) \; \mathrm{d} \Omega \\
    %%%
    \label{Eqn:Backwards_d2E_by_dt2}
    \ddot{\mathcal{E}}(t) 
    &= 2 \int_{\Omega} \frac{\gamma}{\phi_1(\mathbf{x})} \frac{\partial \mathbf{w}_{1}(\mathbf{x},t)}{\partial t} \bullet \frac{\partial \mathbf{w}_{1}(\mathbf{x},t)}{\partial t} \; \mathrm{d} \Omega 
    + 2 \int_{\Omega} \frac{\gamma}{\phi_2(\mathbf{x})} \frac{\partial \mathbf{w}_{2}(\mathbf{x},t)}{\partial t} \bullet \frac{\partial \mathbf{w}_{2}(\mathbf{x},t)}{\partial t} \; \mathrm{d} \Omega 
    %%%
\end{align}
\end{proposition}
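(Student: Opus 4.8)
The plan is to obtain the first derivative by differentiating $\mathcal{E}(t)$ under the integral sign---legitimate since $\Omega$ is bounded and the fields are smooth---and to obtain the second derivative by differentiating once more and then eliminating the resulting awkward term through the governing equations satisfied by $\{\mathbf{w}_1,\mathbf{w}_2,q_1,q_2\}$. First I would establish Eq.~\eqref{Eqn:Backwards_dE_by_dt}: because $\gamma$, $\phi_1(\mathbf{x})$, and $\phi_2(\mathbf{x})$ are time-independent, the product rule gives $\tfrac{\partial}{\partial t}\big(\mathbf{w}_i\bullet\mathbf{w}_i\big)=2\,\tfrac{\partial\mathbf{w}_i}{\partial t}\bullet\mathbf{w}_i$, and the factor $2$ cancels the prefactor $\tfrac12$. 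This step is routine.

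The substance lies in Eq.~\eqref{Eqn:Backwards_d2E_by_dt2}. Differentiating Eq.~\eqref{Eqn:Backwards_dE_by_dt} once more by the product rule yields $\ddot{\mathcal{E}}(t)=I(t)+J(t)$, where
\[
I(t):=\sum_{i=1}^{2}\int_{\Omega}\frac{\gamma}{\phi_i(\mathbf{x})}\frac{\partial^{2}\mathbf{w}_i}{\partial t^{2}}\bullet\mathbf{w}_i\;\mathrm{d}\Omega, \qquad J(t):=\sum_{i=1}^{2}\int_{\Omega}\frac{\gamma}{\phi_i(\mathbf{x})}\frac{\partial\mathbf{w}_i}{\partial t}\bullet\frac{\partial\mathbf{w}_i}{\partial t}\;\mathrm{d}\Omega.
\]
Since the asserted formula is $\ddot{\mathcal{E}}=2J$, the crux is to prove the nontrivial identity $I=J$. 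To this end I would substitute $\frac{\gamma}{\phi_i}\ddot{\mathbf{w}}_i=-\mu\,\mathbf{K}_i^{-1}\dot{\mathbf{w}}_i-\mathrm{grad}[\dot{q}_i]$, obtained by differentiating Eqs.~\eqref{Eqn:Backwards_wq_BoLM_1}--\eqref{Eqn:Backwards_wq_BoLM_2} in time, into $I$, and substitute $\frac{\gamma}{\phi_i}\dot{\mathbf{w}}_i=-\mu\,\mathbf{K}_i^{-1}\mathbf{w}_i-\mathrm{grad}[q_i]$ into one factor of $J$. Because each $\mathbf{K}_i^{-1}$ is symmetric, the viscous contributions to $I$ and $J$ agree termwise, so $I-J$ collapses to the difference of two gradient-pairings, and it remains to show $\sum_i\int_{\Omega}\mathrm{grad}[\dot{q}_i]\bullet\mathbf{w}_i\,\mathrm{d}\Omega=\sum_i\int_{\Omega}\mathrm{grad}[q_i]\bullet\dot{\mathbf{w}}_i\,\mathrm{d}\Omega$.

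I expect this last equality to be the main obstacle, and I would settle it by integrating both sides by parts via the divergence theorem. The boundary contributions vanish on all of $\partial\Omega$: on $\Gamma_i^{p}$ the conditions Eqs.~\eqref{Eqn:Backwards_wq_pBC_1}--\eqref{Eqn:Backwards_wq_pBC_2} give $q_i=\dot{q}_i=0$, while on $\Gamma_i^{u}$ the conditions Eqs.~\eqref{Eqn:Backwards_wq_vBC_1}--\eqref{Eqn:Backwards_wq_vBC_2} give $\mathbf{w}_i\bullet\widehat{\mathbf{n}}=0$ for all $t$, whence $\dot{\mathbf{w}}_i\bullet\widehat{\mathbf{n}}=0$ as well; the partitions Eqs.~\eqref{Eqn:Backwards_macro_gamma_partition}--\eqref{Eqn:Backwards_micro_gamma_partition} then close the boundary of each network. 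After discarding the boundary terms, the surviving volume integrals contain $\mathrm{div}[\mathbf{w}_i]$ and $\mathrm{div}[\dot{\mathbf{w}}_i]$, which I would replace using the mass-balance relations Eqs.~\eqref{Eqn:Backwards_wq_BoM_1}--\eqref{Eqn:Backwards_wq_BoM_2} and their time derivatives. Both sides then reduce to the single expression $\tfrac{\beta}{\mu}\int_{\Omega}(q_1-q_2)(\dot{q}_1-\dot{q}_2)\,\mathrm{d}\Omega$, which proves $I=J$ and therefore $\ddot{\mathcal{E}}=I+J=2J$, exactly Eq.~\eqref{Eqn:Backwards_d2E_by_dt2}. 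The only delicacy worth flagging is the inference $\dot{\mathbf{w}}_i\bullet\widehat{\mathbf{n}}=0$ from the velocity boundary conditions, which relies on those conditions holding for every $t$ and not merely at a single instant.
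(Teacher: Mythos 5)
Your proof is correct, but it is organized differently from the paper's. The paper never introduces $\partial^2\mathbf{w}_i/\partial t^2$: it first substitutes the momentum and mass-balance equations into $\dot{\mathcal{E}}$ to obtain the closed-form dissipation identity
\begin{align*}
\dot{\mathcal{E}}(t)=-\int_{\Omega}\mu\,\mathbf{K}_1^{-1}\mathbf{w}_1\bullet\mathbf{w}_1\;\mathrm{d}\Omega-\int_{\Omega}\mu\,\mathbf{K}_2^{-1}\mathbf{w}_2\bullet\mathbf{w}_2\;\mathrm{d}\Omega-\int_{\Omega}\tfrac{\beta}{\mu}\big(q_1-q_2\big)^2\;\mathrm{d}\Omega,
\end{align*}
then differentiates \emph{this} expression once and runs the same manipulations (mass balance, Green's identity, boundary conditions, momentum balance) in reverse to land on $2J$. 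You instead differentiate $\dot{\mathcal{E}}$ directly, write $\ddot{\mathcal{E}}=I+J$, and reduce the claim to the symmetry identity $I=J$, which you settle by pairing the time-differentiated momentum equation against $\mathbf{w}_i$ and the undifferentiated one against $\dot{\mathbf{w}}_i$; the symmetry of $\mathbf{K}_i^{-1}$ and the matching reductions of both gradient pairings to $\tfrac{\beta}{\mu}\int_{\Omega}(q_1-q_2)(\dot q_1-\dot q_2)\,\mathrm{d}\Omega$ do exactly what you claim, and the boundary terms vanish for the reasons you give (the paper makes the same inference $\dot{\mathbf{w}}_i\bullet\widehat{\mathbf{n}}=0$ on $\Gamma_i^u$ from the boundary condition holding for all $t$). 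The trade-off: your route requires the existence of $\ddot{\mathbf{w}}_i$ and a time-differentiated momentum balance, i.e.\ one more order of temporal regularity than the paper's, while making the underlying self-adjointness structure explicit; the paper's route is economical on regularity and yields, as a byproduct, the physically meaningful fact that $\dot{\mathcal{E}}\le 0$ (decay of the kinetic-energy norm), which your argument does not surface.
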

%==========================;
%  Proof of Proposition 2  ;
%--------------------------;
\begin{proof}
Since the domain does not deform (i.e., $\Omega$ is independent of time), we take the time derivative inside the integrals of Eq.~\eqref{Eqn:Backwards_E_of_t}. Noting that $\gamma$, $\phi_1(\mathbf{x})$, and $\phi_2(\mathbf{x})$ are also independent of time, the product rule for differentiation establishes the first identity:
%-------------------------;
%  Step 1: Proving dE/dt  ;
%-------------------------;
\begin{align}
    %%%
    \dot{\mathcal{E}}(t) 
    &= \int_{\Omega} \frac{\gamma}{\phi_1(\mathbf{x})} \frac{\partial \mathbf{w}_{1}(\mathbf{x},t)}{\partial t} \bullet \mathbf{w}_{1}(\mathbf{x},t) \; \mathrm{d} \Omega
    + \int_{\Omega} \frac{\gamma}{\phi_2(\mathbf{x})} \frac{\partial \mathbf{w}_{2}(\mathbf{x},t)}{\partial t} \bullet \mathbf{w}_{2}(\mathbf{x},t) \; \mathrm{d} \Omega 
    %%%
\end{align}

To establish the second identity, we proceed as follows. Substituting Eqs.~\eqref{Eqn:Backwards_wq_BoLM_1} and \eqref{Eqn:Backwards_wq_BoLM_2} into the above identity, we get: 
%---------------------------;
%  Step 2: Enforcing BoLMs  ;
%---------------------------;
\begin{align}
    \dot{\mathcal{E}}(t) 
    &= -\int_{\Omega} 
    \Big\{
    \mu \, \mathbf{K}_1^{-1}(\mathbf{x}) \, \mathbf{w}_1(\mathbf{x},t) + 
    \mathrm{grad}\big[q_1(\mathbf{x},t)\big]
    \Big\} 
    \bullet \mathbf{w}_{1}(\mathbf{x},t) \; \mathrm{d} \Omega \nonumber \\
    &\qquad \qquad -\int_{\Omega} 
    \Big\{
    \mu \, \mathbf{K}_2^{-1}(\mathbf{x}) \, \mathbf{w}_2(\mathbf{x},t) + 
    \mathrm{grad}\big[q_2(\mathbf{x},t)\big]
    \Big\} 
    \bullet \mathbf{w}_{2}(\mathbf{x},t) \; \mathrm{d} \Omega
\end{align}
Applying Green's identity on the two ``grad'' terms, noting the boundary partitions \eqref{Eqn:Backwards_macro_gamma_partition} and \eqref{Eqn:Backwards_micro_gamma_partition}, and rearranging the terms, we obtain:
%----------------------------;
%  Step 3: Green's identity  ; 
%----------------------------;
\begin{align}
    \dot{\mathcal{E}}(t) 
    &= -\int_{\Omega} 
    \mu \, \mathbf{K}_1^{-1}(\mathbf{x}) \, \mathbf{w}_1(\mathbf{x},t) 
    \bullet \mathbf{w}_{1}(\mathbf{x},t) \; \mathrm{d} \Omega 
    -\int_{\Omega} 
    \mu \, \mathbf{K}_2^{-1}(\mathbf{x}) \, \mathbf{w}_2(\mathbf{x},t) 
    \bullet \mathbf{w}_{2}(\mathbf{x},t) \; \mathrm{d} \Omega \nonumber \\ 
    %%%
    &\qquad +\int_{\Omega} 
    \mathrm{div}\big[\mathbf{w}_1(\mathbf{x},t)\big] \, q_{1}(\mathbf{x},t) \; \mathrm{d} \Omega 
    +\int_{\Omega} 
    \mathrm{div}\big[\mathbf{w}_2(\mathbf{x},t)\big] \, q_{2}(\mathbf{x},t) \; \mathrm{d} \Omega
    \nonumber \\ 
    %%%
    &\qquad -\int_{\Gamma_1^{u}} 
    q_{1}(\mathbf{x},t) \, 
    \Big(\mathbf{w}_1(\mathbf{x},t) \bullet 
    \widehat{\mathbf{n}}(\mathbf{x}) \Big) 
    \; \mathrm{d} \Gamma  
    -\int_{\Gamma_1^{p}} 
    q_{1}(\mathbf{x},t) \, 
    \Big(\mathbf{w}_1(\mathbf{x},t) \bullet 
    \widehat{\mathbf{n}}(\mathbf{x}) \Big) 
    \; \mathrm{d} \Gamma  
    \nonumber \\ 
    %%%
    &\qquad -\int_{\Gamma_2^{u}} 
    q_{2}(\mathbf{x},t) \, 
    \Big(\mathbf{w}_2(\mathbf{x},t) \bullet 
    \widehat{\mathbf{n}}(\mathbf{x}) \Big) 
    \; \mathrm{d} \Gamma  
    -\int_{\Gamma_2^{p}} 
    q_{2}(\mathbf{x},t) \, 
    \Big(\mathbf{w}_2(\mathbf{x},t) \bullet 
    \widehat{\mathbf{n}}(\mathbf{x}) \Big) 
    \; \mathrm{d} \Gamma  
    %%%
\end{align}
Boundary conditions \eqref{Eqn:Backwards_wq_vBC_1}--\eqref{Eqn:Backwards_wq_pBC_2} imply that the last four integrals vanish, and hence we have:  
%-------------------------------;
%  Step 4: Boundary conditions  ;
%-------------------------------;
\begin{align}
    \dot{\mathcal{E}}(t) 
    &= -\int_{\Omega} 
    \mu \, \mathbf{K}_1^{-1}(\mathbf{x}) \, \mathbf{w}_1(\mathbf{x},t) 
    \bullet \mathbf{w}_{1}(\mathbf{x},t) \; \mathrm{d} \Omega 
    -\int_{\Omega} 
    \mu \, \mathbf{K}_2^{-1}(\mathbf{x}) \, \mathbf{w}_2(\mathbf{x},t) 
    \bullet \mathbf{w}_{2}(\mathbf{x},t) \; \mathrm{d} \Omega \nonumber \\ 
    %%%
    &\qquad +\int_{\Omega} 
    \mathrm{div}\big[\mathbf{w}_1(\mathbf{x},t)\big] \, q_{1}(\mathbf{x},t) \; \mathrm{d} \Omega 
    +\int_{\Omega} 
    \mathrm{div}\big[\mathbf{w}_2(\mathbf{x},t)\big] \, q_{2}(\mathbf{x},t) \; \mathrm{d} \Omega
    %%%
\end{align}
Substituting the mass balance equations \eqref{Eqn:Backwards_wq_BoM_1} and \eqref{Eqn:Backwards_wq_BoM_2} into the above equation, we arrive at the following:
%----------------------------------;
%  Step 5: Mass balance equations  ; 
%----------------------------------;
\begin{align}
    \dot{\mathcal{E}}(t) 
    &= -\int_{\Omega} 
    \mu \, \mathbf{K}_1^{-1}(\mathbf{x}) \, \mathbf{w}_1(\mathbf{x},t) 
    \bullet \mathbf{w}_{1}(\mathbf{x},t) \; \mathrm{d} \Omega 
    -\int_{\Omega} 
    \mu \, \mathbf{K}_2^{-1}(\mathbf{x}) \, \mathbf{w}_2(\mathbf{x},t) 
    \bullet \mathbf{w}_{2}(\mathbf{x},t) \; \mathrm{d} \Omega 
    \nonumber \\
    &\qquad \qquad -\int_{\Omega} 
    \frac{\beta}{\mu} \, \big(q_1(\mathbf{x},t) - q_2(\mathbf{x},t)\big)^2 \; 
    \mathrm{d} \Omega
\end{align}

Now taking the time derivative on both sides, we establish: 
%---------------------------;
%  Step 6: Time derivative  ;
%---------------------------;
\begin{align}
    \ddot{\mathcal{E}}(t) 
    &= -2 \int_{\Omega} 
    \mu \, \mathbf{K}_1^{-1}(\mathbf{x}) \, \mathbf{w}_1(\mathbf{x},t) 
    \bullet \frac{\partial \mathbf{w}_{1}(\mathbf{x},t)}{\partial t} \; \mathrm{d} \Omega 
    -2 \int_{\Omega} 
    \mu \, \mathbf{K}_2^{-1}(\mathbf{x}) \, \mathbf{w}_2(\mathbf{x},t) 
    \bullet \frac{\partial \mathbf{w}_{2}(\mathbf{x},t)}{\partial t} \; \mathrm{d} \Omega 
    \nonumber \\
    &\qquad -2\int_{\Omega} 
    \frac{\beta}{\mu} \, \big(q_1(\mathbf{x},t) - q_2(\mathbf{x},t)\big) \left(\frac{\partial q_1(\mathbf{x},t)}{\partial t} - \frac{\partial q_2(\mathbf{x},t)}{\partial t} \right) \; 
    \mathrm{d} \Omega
\end{align}
Using the mass balance equations \eqref{Eqn:Backwards_wq_BoM_1} and \eqref{Eqn:Backwards_wq_BoM_2}, we write: 
%------------------------;
%  Step 7: Mass balance  ;
%------------------------;
\begin{align}
    \ddot{\mathcal{E}}(t) 
    &= -2 \int_{\Omega} 
    \mu \, \mathbf{K}_1^{-1}(\mathbf{x}) \, \mathbf{w}_1(\mathbf{x},t) 
    \bullet \frac{\partial \mathbf{w}_{1}(\mathbf{x},t)}{\partial t} \; \mathrm{d} \Omega 
    -2 \int_{\Omega} 
    \mu \, \mathbf{K}_2^{-1}(\mathbf{x}) \, \mathbf{w}_2(\mathbf{x},t) 
    \bullet \frac{\partial \mathbf{w}_{2}(\mathbf{x},t)}{\partial t} \; \mathrm{d} \Omega 
    \nonumber \\
    &\qquad +2\int_{\Omega} 
    q_1(\mathbf{x},t) \, \mathrm{div}\Big[\frac{\partial \mathbf{w}_1(\mathbf{x},t)}{\partial t} \Big] \; 
    \mathrm{d} \Omega
    + 2\int_{\Omega} 
    q_2(\mathbf{x},t) \, \mathrm{div}\Big[\frac{\partial \mathbf{w}_2(\mathbf{x},t)}{\partial t}\Big] \; 
    \mathrm{d} \Omega
\end{align}
Utilizing Green's identity and noting the boundary partitions given by Eqs.~\eqref{Eqn:Backwards_macro_gamma_partition} and \eqref{Eqn:Backwards_micro_gamma_partition}, the above equation becomes: 
%----------------------------;
%  Step 8: Green's identity  ;
%----------------------------;
\begin{align}
    \ddot{\mathcal{E}}(t) 
    &= -2 \int_{\Omega} 
    \Big\{
    \mu \, \mathbf{K}_1^{-1}(\mathbf{x}) \, \mathbf{w}_1(\mathbf{x},t) + 
    \mathrm{grad}\big[q_1(\mathbf{x},t)\big]
    \Big\} 
    \bullet \frac{\partial \mathbf{w}_{1}(\mathbf{x},t)}{\partial t} \; \mathrm{d} \Omega \nonumber \\
    &\qquad -2 \int_{\Omega} 
    \Big\{
    \mu \, \mathbf{K}_2^{-1}(\mathbf{x}) \, \mathbf{w}_2(\mathbf{x},t) + 
    \mathrm{grad}\big[q_2(\mathbf{x},t)\big]
    \Big\} 
    \bullet \frac{\partial \mathbf{w}_{2}(\mathbf{x},t)}{\partial t} \; \mathrm{d} \Omega
    \nonumber \\ 
    %%%
    &\qquad +2 \int_{\Gamma_1^{u}}  
    q_1(\mathbf{x},t) \, \left(
    \frac{\partial \mathbf{w}_{1}(\mathbf{x},t)}{\partial t} 
    \bullet \widehat{\mathbf{n}}(\mathbf{x}) 
    \right) \; \mathrm{d} \Gamma
    +2 \int_{\Gamma_1^{p}}  
    q_1(\mathbf{x},t) \, \left(
    \frac{\partial \mathbf{w}_{1}(\mathbf{x},t)}{\partial t} 
    \bullet \widehat{\mathbf{n}}(\mathbf{x}) 
    \right) \; \mathrm{d} \Gamma
    \nonumber \\
    %%%
    &\qquad +2 \int_{\Gamma_2^{u}}  
    q_2(\mathbf{x},t) \, \left(
    \frac{\partial \mathbf{w}_{2}(\mathbf{x},t)}{\partial t} 
    \bullet \widehat{\mathbf{n}}(\mathbf{x}) 
    \right) \; \mathrm{d} \Gamma
    +2 \int_{\Gamma_2^{p}}  
    q_2(\mathbf{x},t) \, \left(
    \frac{\partial \mathbf{w}_{2}(\mathbf{x},t)}{\partial t} 
    \bullet \widehat{\mathbf{n}}(\mathbf{x}) 
    \right) \; \mathrm{d} \Gamma
    %%%
\end{align}
Once again, appealing to the boundary conditions \eqref{Eqn:Backwards_wq_vBC_1}--\eqref{Eqn:Backwards_wq_pBC_2}, we conclude that the last four boundary integrals are zero, resulting in the following:
%-------------------------------;
%  Step 9: Boundary conditions  ;
%-------------------------------;
\begin{align}
    \ddot{\mathcal{E}}(t) 
    &= -2 \int_{\Omega} 
    \Big\{
    \mu \, \mathbf{K}_1^{-1}(\mathbf{x}) \, \mathbf{w}_1(\mathbf{x},t) + 
    \mathrm{grad}\big[q_1(\mathbf{x},t)\big]
    \Big\} 
    \bullet \frac{\partial \mathbf{w}_{1}(\mathbf{x},t)}{\partial t} \; \mathrm{d} \Omega \nonumber \\
    &\qquad -2 \int_{\Omega} 
    \Big\{
    \mu \, \mathbf{K}_2^{-1}(\mathbf{x}) \, \mathbf{w}_2(\mathbf{x},t) + 
    \mathrm{grad}\big[q_2(\mathbf{x},t)\big]
    \Big\} 
    \bullet \frac{\partial \mathbf{w}_{2}(\mathbf{x},t)}{\partial t} \; \mathrm{d} \Omega
    %%%
\end{align}
Finally, invoking Eqs.~\eqref{Eqn:Backwards_wq_BoLM_1} and \eqref{Eqn:Backwards_wq_BoLM_2} gives the second identity: 
%---------------------------;
%  Step 10: Invoking BoLMs  ;
%---------------------------;
\begin{align}
    \ddot{\mathcal{E}}(t) 
    = 2 \int_{\Omega} \frac{\gamma}{\phi_1(\mathbf{x})} \frac{\partial \mathbf{w}_{1}(\mathbf{x},t)}{\partial t} \bullet \frac{\partial \mathbf{w}_{1}(\mathbf{x},t)}{\partial t} \; \mathrm{d} \Omega 
    + 2 \int_{\Omega} \frac{\gamma}{\phi_2(\mathbf{x})} \frac{\partial \mathbf{w}_{2}(\mathbf{x},t)}{\partial t} \bullet \frac{\partial \mathbf{w}_{2}(\mathbf{x},t)}{\partial t} \; \mathrm{d} \Omega 
\end{align}
\end{proof}

%======================================;
%  Proposition 3: Inequality for E(t)  ; 
%--------------------------------------;
\begin{proposition}
\label{Prop:Backward_Inequality_for_E}
The function $\mathcal{E}(t)$, defined in Eq.~\eqref{Eqn:Backwards_E_of_t}, satisfies the following inequality: 
%----------------------------------;
%  Equation: Important inequality  ;
%----------------------------------;
\begin{align}
\label{Eqn:Backwards_Important_inequality}
\left(\dot{\mathcal{E}}(t)\right)^2 
\leq \mathcal{E}(t) \, \ddot{\mathcal{E}}(t) 
\end{align}
\end{proposition}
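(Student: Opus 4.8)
The plan is to recognize the inequality \eqref{Eqn:Backwards_Important_inequality} as nothing more than the Cauchy-Schwarz inequality in a suitably weighted inner-product space, using the expressions for $\mathcal{E}(t)$, $\dot{\mathcal{E}}(t)$, and $\ddot{\mathcal{E}}(t)$ already supplied by Proposition \ref{Prop:Backward_Time_deriatives_of_E}. To set this up, I would introduce the weighted $L^2$ inner product acting on pairs of time-dependent vector fields $\mathbf{a} = (\mathbf{a}_1,\mathbf{a}_2)$ and $\mathbf{b} = (\mathbf{b}_1,\mathbf{b}_2)$, namely
\[
\langle \mathbf{a},\mathbf{b} \rangle := \int_{\Omega} \frac{\gamma}{\phi_1(\mathbf{x})} \, \mathbf{a}_1 \bullet \mathbf{b}_1 \; \mathrm{d}\Omega + \int_{\Omega} \frac{\gamma}{\phi_2(\mathbf{x})} \, \mathbf{a}_2 \bullet \mathbf{b}_2 \; \mathrm{d}\Omega .
\]
Since $\gamma$, $\phi_1(\mathbf{x})$, and $\phi_2(\mathbf{x})$ are strictly positive, this bilinear form is symmetric and positive-definite, hence a genuine inner product; I denote its induced norm by $\|\cdot\|$.

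Next, writing $\mathbf{w} = (\mathbf{w}_1,\mathbf{w}_2)$ and $\dot{\mathbf{w}} = (\partial \mathbf{w}_1/\partial t, \, \partial \mathbf{w}_2/\partial t)$, the definition \eqref{Eqn:Backwards_E_of_t} together with the two identities \eqref{Eqn:Backwards_dE_by_dt} and \eqref{Eqn:Backwards_d2E_by_dt2} translate directly into
\[
\mathcal{E}(t) = \frac{1}{2}\,\|\mathbf{w}\|^2, \qquad \dot{\mathcal{E}}(t) = \langle \dot{\mathbf{w}},\mathbf{w} \rangle, \qquad \ddot{\mathcal{E}}(t) = 2\,\|\dot{\mathbf{w}}\|^2 .
\]
The only point meriting any care is the bookkeeping of the numerical prefactors: the factor $1/2$ appearing in $\mathcal{E}(t)$ and the factor $2$ appearing in $\ddot{\mathcal{E}}(t)$ are precisely what conspire to make the product collapse to $\mathcal{E}(t)\,\ddot{\mathcal{E}}(t) = \|\mathbf{w}\|^2\,\|\dot{\mathbf{w}}\|^2$.

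Finally, I would apply the Cauchy-Schwarz inequality in this inner-product space to the two elements $\dot{\mathbf{w}}$ and $\mathbf{w}$, which gives $\langle \dot{\mathbf{w}},\mathbf{w} \rangle^2 \leq \|\dot{\mathbf{w}}\|^2 \, \|\mathbf{w}\|^2$. Substituting the three translations above then yields
\[
\big(\dot{\mathcal{E}}(t)\big)^2 = \langle \dot{\mathbf{w}},\mathbf{w} \rangle^2 \leq \|\dot{\mathbf{w}}\|^2 \, \|\mathbf{w}\|^2 = \Big(2\|\dot{\mathbf{w}}\|^2\Big)\Big(\tfrac{1}{2}\|\mathbf{w}\|^2\Big) = \mathcal{E}(t)\,\ddot{\mathcal{E}}(t),
\]
which is the asserted inequality. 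I do not anticipate a genuine obstacle here: once the quantities from Proposition \ref{Prop:Backward_Time_deriatives_of_E} are read through the correct weighted inner product, the entire content of the proposition reduces to Cauchy-Schwarz, and the only substantive observation is the fortuitous matching of the $1/2$ and $2$ prefactors. It is worth noting for the sequel that equality holds exactly when $\dot{\mathbf{w}}$ and $\mathbf{w}$ are linearly dependent, which is the structural fact underlying the logarithmic convexity of $\mathcal{E}(t)$ exploited in the backward-in-time uniqueness argument.
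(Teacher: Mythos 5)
Your proposal is correct, and the mathematical content is the same as the paper's: both proofs reduce the inequality to the Cauchy--Schwarz inequality applied to the quantities supplied by Proposition~\ref{Prop:Backward_Time_deriatives_of_E}, with the factors $1/2$ and $2$ cancelling at the end. The difference is one of packaging. The paper keeps the macro- and micro-network contributions separate: it squares $\dot{\mathcal{E}}(t)$ to produce two square terms and a cross term, applies Cauchy--Schwarz to each network's weighted $L^2$ integral individually, and then invokes the A.M.--G.M.\ inequality to absorb the cross term before regrouping. Your single application of Cauchy--Schwarz in the direct-sum inner product $\langle \mathbf{a},\mathbf{b}\rangle = \int_{\Omega} \tfrac{\gamma}{\phi_1}\,\mathbf{a}_1\bullet\mathbf{b}_1\,\mathrm{d}\Omega + \int_{\Omega} \tfrac{\gamma}{\phi_2}\,\mathbf{a}_2\bullet\mathbf{b}_2\,\mathrm{d}\Omega$ subsumes that two-step component-wise argument (indeed, the paper's CS-plus-AM--GM computation is precisely the standard proof that Cauchy--Schwarz holds on a direct sum of inner-product spaces), so your route is shorter and makes the structure transparent; it also hands you the equality condition --- linear dependence of $\mathbf{w}$ and $\dot{\mathbf{w}}$ --- essentially for free, which, as you note, is the log-convexity fact driving the backward-in-time uniqueness argument. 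The paper's version buys nothing extra beyond being elementary and self-contained at the level of scalar integrals.
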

%==========================;
%  Proof of Proposition 3  ;
%--------------------------;
\begin{proof}
We start with the expression for $\dot{\mathcal{E}}(t)$ (i.e., Eq.~\eqref{Eqn:Backwards_dE_by_dt}): 
%-------------------;
%  Equation: dE/dt  ;
%-------------------;
\begin{align}
    \dot{\mathcal{E}}(t) 
    = \int_{\Omega} \frac{\gamma}{\phi_1(\mathbf{x})} \frac{\partial \mathbf{w}_{1}(\mathbf{x},t)}{\partial t} \bullet \mathbf{w}_{1}(\mathbf{x},t) \; \mathrm{d} \Omega
    + \int_{\Omega} \frac{\gamma}{\phi_2(\mathbf{x})} \frac{\partial \mathbf{w}_{2}(\mathbf{x},t)}{\partial t} \bullet \mathbf{w}_{2}(\mathbf{x},t) \; \mathrm{d} \Omega 
\end{align}
By squaring both sides of the above equation, we write the following: 
%----------------------------------;
%  Step 1: Squaring on both sides  ;
%----------------------------------;
\begin{align}
    \left(\dot{\mathcal{E}}(t)\right)^2 
    %%%
    &= \left(\int_{\Omega} \frac{\gamma}{\phi_1(\mathbf{x})} \frac{\partial \mathbf{w}_{1}(\mathbf{x},t)}{\partial t} \bullet \mathbf{w}_{1}(\mathbf{x},t) \; \mathrm{d} \Omega\right)^2 
    + \left(\int_{\Omega} \frac{\gamma}{\phi_2(\mathbf{x})} \frac{\partial \mathbf{w}_{2}(\mathbf{x},t)}{\partial t} \bullet \mathbf{w}_{2}(\mathbf{x},t) \; \mathrm{d} \Omega \right)^2 
    \nonumber \\
    %%%
    &\qquad + 2 
    \left(\int_{\Omega} \frac{\gamma}{\phi_1(\mathbf{x})} \frac{\partial \mathbf{w}_{1}(\mathbf{x},t)}{\partial t} \bullet \mathbf{w}_{1}(\mathbf{x},t) \; \mathrm{d} \Omega \right)
    \left(\int_{\Omega} \frac{\gamma}{\phi_2(\mathbf{x})} \frac{\partial \mathbf{w}_{2}(\mathbf{x},t)}{\partial t} \bullet \mathbf{w}_{2}(\mathbf{x},t)
    \; \mathrm{d} \Omega\right)  
    %%% 
\end{align}
Applying Cauchy-Schwarz inequality \citep{steele2004cauchy} individually to the terms in the brackets on the right side of the above equation, we get: 
%-----------------------------------;
%  Step 2: Applying Cauchy-Schwarz  ;
%-----------------------------------;
\begin{align}
    \left(\dot{\mathcal{E}}(t)\right)^2  
    %%%
    &\leq 
    \left(\int_{\Omega} \frac{\gamma}{\phi_1(\mathbf{x})} \mathbf{w}_{1}(\mathbf{x},t) \bullet \mathbf{w}_{1}(\mathbf{x},t) \; \mathrm{d} \Omega \right)
    \left(\int_{\Omega} \frac{\gamma}{\phi_1(\mathbf{x})} \frac{\partial \mathbf{w}_{1}(\mathbf{x},t)}{\partial t} \bullet \frac{\partial \mathbf{w}_{1}(\mathbf{x},t)}{\partial t} \; \mathrm{d} \Omega\right) 
     \nonumber \\
    %%%
    &\qquad + 
    \left(\int_{\Omega} \frac{\gamma}{\phi_2(\mathbf{x})} \mathbf{w}_{2}(\mathbf{x},t) \bullet \mathbf{w}_{2}(\mathbf{x},t) \; \mathrm{d} \Omega \right)
    \left(\int_{\Omega} \frac{\gamma}{\phi_2(\mathbf{x})} \frac{\partial \mathbf{w}_{2}(\mathbf{x},t)}{\partial t} \bullet \frac{\partial \mathbf{w}_{2}(\mathbf{x},t)}{\partial t} \; \mathrm{d} \Omega\right) 
     \nonumber \\
    %%%
    &\qquad + 2 \; \sqrt{
    \left(\int_{\Omega} \frac{\gamma}{\phi_1(\mathbf{x})} \mathbf{w}_{1}(\mathbf{x},t) \bullet \mathbf{w}_{1}(\mathbf{x},t) \; \mathrm{d} \Omega \right)
    \left(\int_{\Omega} \frac{\gamma}{\phi_1(\mathbf{x})} \frac{\partial \mathbf{w}_{1}(\mathbf{x},t)}{\partial t} \bullet \frac{\partial \mathbf{w}_{1}(\mathbf{x},t)}{\partial t} \; \mathrm{d} \Omega\right)
    } \nonumber \\
    &\qquad \qquad \sqrt{
    \left(\int_{\Omega} \frac{\gamma}{\phi_2(\mathbf{x})} \mathbf{w}_{2}(\mathbf{x},t) \bullet \mathbf{w}_{2}(\mathbf{x},t) \; \mathrm{d} \Omega \right)
    \left(\int_{\Omega} \frac{\gamma}{\phi_2(\mathbf{x})} \frac{\partial \mathbf{w}_{2}(\mathbf{x},t)}{\partial t} \bullet \frac{\partial \mathbf{w}_{2}(\mathbf{x},t)}{\partial t} \; \mathrm{d} \Omega\right)
    }
    %%% 
\end{align}
Invoking the \emph{inequality of arithmetic and geometric means} (i.e., A.M.-G.M. inequality) \citep{hardy1952inequalities} on the last term, we have the following estimate: 
%------------------------------;
%  Step 3: Applying A.M.-G.M.  ;
%------------------------------;
\begin{align}
    \left(\dot{\mathcal{E}}(t)\right)^2
    %%%
    &\leq 
    \left(\int_{\Omega} \frac{\gamma}{\phi_1(\mathbf{x})} \mathbf{w}_{1}(\mathbf{x},t) \bullet \mathbf{w}_{1}(\mathbf{x},t) \; \mathrm{d} \Omega \right)
    \left(\int_{\Omega} \frac{\gamma}{\phi_1(\mathbf{x})} \frac{\partial \mathbf{w}_{1}(\mathbf{x},t)}{\partial t} \bullet \frac{\partial \mathbf{w}_{1}(\mathbf{x},t)}{\partial t} \; \mathrm{d} \Omega\right) 
     \nonumber \\
    %%%
    &\qquad + 
    \left(\int_{\Omega} \frac{\gamma}{\phi_2(\mathbf{x})} \mathbf{w}_{2}(\mathbf{x},t) \bullet \mathbf{w}_{2}(\mathbf{x},t) \; \mathrm{d} \Omega \right)
    \left(\int_{\Omega} \frac{\gamma}{\phi_2(\mathbf{x})} \frac{\partial \mathbf{w}_{2}(\mathbf{x},t)}{\partial t} \bullet \frac{\partial \mathbf{w}_{2}(\mathbf{x},t)}{\partial t} \; \mathrm{d} \Omega\right) 
     \nonumber \\
    %%%
    &\qquad + 
    \left(\int_{\Omega} \frac{\gamma}{\phi_1(\mathbf{x})} \mathbf{w}_{1}(\mathbf{x},t) \bullet \mathbf{w}_{1}(\mathbf{x},t) \; \mathrm{d} \Omega \right)
    \left(\int_{\Omega} \frac{\gamma}{\phi_2(\mathbf{x})} \frac{\partial \mathbf{w}_{2}(\mathbf{x},t)}{\partial t} \bullet \frac{\partial \mathbf{w}_{2}(\mathbf{x},t)}{\partial t} \; \mathrm{d} \Omega\right) 
     \nonumber \\
    %%%
    &\qquad + 
    \left(\int_{\Omega} \frac{\gamma}{\phi_2(\mathbf{x})} \mathbf{w}_{2}(\mathbf{x},t) \bullet \mathbf{w}_{2}(\mathbf{x},t) \; \mathrm{d} \Omega \right)
    \left(\int_{\Omega} \frac{\gamma}{\phi_1(\mathbf{x})} \frac{\partial \mathbf{w}_{1}(\mathbf{x},t)}{\partial t} \bullet \frac{\partial \mathbf{w}_{1}(\mathbf{x},t)}{\partial t} \; \mathrm{d} \Omega\right) 
    %%% 
\end{align}
We arrange the terms on the right side of the above equation to obtain the following: 
%---------------------------------;
%  Step 4: Rearranging the terms  ;
%---------------------------------;
\begin{align}
    \left(\dot{\mathcal{E}}(t)\right)^2  
    %%%
    &\leq \left(
    \int_{\Omega} \frac{\gamma}{\phi_1(\mathbf{x})} \mathbf{w}_{1}(\mathbf{x},t) \bullet \mathbf{w}_{1}(\mathbf{x},t) \, \mathrm{d} \Omega  
    + \int_{\Omega} \frac{\gamma}{\phi_2(\mathbf{x})} \mathbf{w}_{2}(\mathbf{x},t) \bullet \mathbf{w}_{2}(\mathbf{x},t) \; \mathrm{d} \Omega 
    \right) 
    \nonumber \\
    %%%
    &\qquad  
    \left(\int_{\Omega} \frac{\gamma}{\phi_1(\mathbf{x})} \frac{\partial \mathbf{w}_{1}(\mathbf{x},t)}{\partial t} \bullet \frac{\partial \mathbf{w}_{1}(\mathbf{x},t)}{\partial t} \; \mathrm{d} \Omega 
    + \int_{\Omega} \frac{\gamma}{\phi_2(\mathbf{x})} \frac{\partial \mathbf{w}_{2}(\mathbf{x},t)}{\partial t} \bullet \frac{\partial \mathbf{w}_{2}(\mathbf{x},t)}{\partial t} \; \mathrm{d} \Omega\right)  
    %%% 
\end{align}
Using Proposition \ref{Prop:Backward_Time_deriatives_of_E} and noting the factors ``$1/2$'' and ``$2$'' in Eqs.~\eqref{Eqn:Backwards_dE_by_dt} and \eqref{Eqn:Backwards_d2E_by_dt2}, we arrive at the desired inequality:
%------------------------------;
%  Step 5: Desired inequality  ;
%------------------------------;
\begin{align}
    \left(\dot{\mathcal{E}}(t)\right)^2  
    & \leq 
    \Big(2 \, \mathcal{E}(t) \Big) \, \Big(\frac{1}{2}\ddot{\mathcal{E}}(t) \Big)
    = \mathcal{E}(t) \, \ddot{\mathcal{E}}(t) 
    %%%
\end{align}
\end{proof}

%-----------------------------------;
%  Remark: Classical heat equation  ;
%-----------------------------------;
\begin{remark}
 The classical heat equation also enjoys an inequality of the form given by Eq.~\eqref{Eqn:Backwards_Important_inequality}. Moreover, proofs establishing the backward-in-time uniqueness for the classical heat equation often utilize such an inequality; for example, see \citep[pages 63--65]{evans2002partial}. Of course, the expression for $\mathcal{E}(t)$ is different for the heat equation. 
\end{remark}

%=========================================;
%  Proposition 4: Exponential inequality  ;
%-----------------------------------------;
\begin{proposition}
\label{Prop:Backwards_Exponential_inequality}
In an interval $[t_1, t_2)$, if a function $\mathcal{F}(t)$ satisfies 
\begin{align}
\label{Eqn:Backwards_Intermediate_result_hypothesis}
\mathcal{F}(t)  > 0 \quad \mathrm{and} \quad  
\left(\dot{\mathcal{F}}(t)\right)^2 \leq \mathcal{F}(t) \, \ddot{\mathcal{F}}(t)
\end{align}
then 
\begin{align}
\mathcal{F}(t) \geq  \mathcal{F}(t_1) \, 
\exp\left[ \frac{\dot{\mathcal{F}}(t_1) \, (t - t_1)}{\mathcal{F}(t_1)}\right] \quad \forall t \in [t_1,t_2)
\end{align}
\end{proposition}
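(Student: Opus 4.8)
The plan is to recognize the differential inequality $\left(\dot{\mathcal{F}}(t)\right)^2 \leq \mathcal{F}(t)\,\ddot{\mathcal{F}}(t)$ as precisely the statement that $\mathcal{F}$ is \emph{logarithmically convex}, and then to exploit the tangent-line characterization of convex functions. Since $\mathcal{F}(t) > 0$ throughout $[t_1,t_2)$, I would first introduce the auxiliary function $\mathcal{G}(t) := \ln \mathcal{F}(t)$, which is well-defined and (because the hypothesis already presumes $\mathcal{F}$ to be twice differentiable) is itself twice differentiable on the interval.

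Next I would compute the two derivatives of $\mathcal{G}$. Differentiating gives $\dot{\mathcal{G}} = \dot{\mathcal{F}}/\mathcal{F}$, and differentiating once more yields
\[
\ddot{\mathcal{G}}(t) = \frac{\mathcal{F}(t)\,\ddot{\mathcal{F}}(t) - \left(\dot{\mathcal{F}}(t)\right)^2}{\mathcal{F}(t)^2}.
\]
The hypothesis forces the numerator to be nonnegative, and since $\mathcal{F}(t)^2 > 0$, it follows that $\ddot{\mathcal{G}}(t) \geq 0$ for all $t \in [t_1,t_2)$. Hence $\mathcal{G}$ is convex on this interval. I would then invoke the standard fact that a differentiable convex function lies above each of its tangent lines, so that for every $t \in [t_1,t_2)$,
\[
\mathcal{G}(t) \geq \mathcal{G}(t_1) + \dot{\mathcal{G}}(t_1)\,(t - t_1).
\]
Substituting back $\mathcal{G} = \ln \mathcal{F}$ together with $\dot{\mathcal{G}}(t_1) = \dot{\mathcal{F}}(t_1)/\mathcal{F}(t_1)$, and then exponentiating both sides---permissible because $\exp$ is monotone increasing---reproduces exactly the claimed lower bound.

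The calculation is entirely routine once the logarithmic substitution is made; the only point meriting a little care is justifying the tangent-line inequality, which I would either cite as a standard property of convex functions or establish in one line by setting $\mathcal{H}(t) := \mathcal{G}(t) - \mathcal{G}(t_1) - \dot{\mathcal{G}}(t_1)(t-t_1)$ and observing that $\mathcal{H}(t_1) = 0$, $\dot{\mathcal{H}}(t_1) = 0$, and $\ddot{\mathcal{H}} = \ddot{\mathcal{G}} \geq 0$; the latter makes $\dot{\mathcal{H}}$ nondecreasing and hence nonnegative on $[t_1,t_2)$, so $\mathcal{H}$ is nondecreasing and therefore $\mathcal{H} \geq 0$. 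I do not anticipate any genuine obstacle here: the entire substance of the proposition is the observation that the differential inequality \eqref{Eqn:Backwards_Intermediate_result_hypothesis} is nothing other than log-convexity of $\mathcal{F}$, after which the exponential estimate is immediate.
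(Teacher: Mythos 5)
Your proof is correct and is essentially the same argument as the paper's: the paper's auxiliary function is $\dot{\mathcal{F}}/\mathcal{F}$, i.e.\ the derivative of your $\ln\mathcal{F}$, and the key computation showing its nonnegativity of derivative is identical to your verification that $\ddot{\mathcal{G}}\geq 0$. The only cosmetic difference is the last step, where the paper integrates the resulting bound $\dot{\mathcal{F}}\geq(\dot{\mathcal{F}}(t_1)/\mathcal{F}(t_1))\,\mathcal{F}$ via the Gronwall--Bellman inequality, whereas you exponentiate the tangent-line inequality for the convex function $\ln\mathcal{F}$ directly; both are valid and equivalent.
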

%==========================;
%  Proof of Proposition 4  ;
%--------------------------;
\begin{proof}
Consider the following function: 
%----------;
%  Step 1  ;
%----------;
\begin{align}
\label{Eqn:Backwards_Function_G_of_t}
\mathcal{G}(t) 
:= \frac{\dot{\mathcal{F}}(t)}{\mathcal{F}(t)}
\end{align}
Since $\mathcal{F}(t) > 0$, $\mathcal{G}(t)$ is well defined. Taking the time derivative on both sides of the above equations, we obtain the following:
%----------;
%  Step 2  ;
%----------;
\begin{align}
\dot{\mathcal{G}}(t) := \frac{\mathcal{F}(t) \, \ddot{\mathcal{F}}(t) - \big(\dot{\mathcal{F}}(t)\big)^2}{\big(\mathcal{F}(t)\big)^2}
\end{align}
By virtue of Eq.~\eqref{Eqn:Backwards_Intermediate_result_hypothesis}, we conclude that 
%----------;
%  Step 3  ;
%----------;
\begin{align}
\dot{\mathcal{G}}(t) \geq 0 \quad \forall t \in [t_1,t_2)
\end{align}

Using the properties of integration (e.g., see \citep{bartle2000introduction}), we infer:
%----------;
%  Step 4  ;
%----------;
\begin{align}
\mathcal{G}(t) - \mathcal{G}(t_1) = \int_{t_1}^{t} \dot{\mathcal{G}}(t) \, \mathrm{d} t \geq 0 
\quad \forall t \in [t_1,t_2)
\end{align}
which, on the account of Eq.~\eqref{Eqn:Backwards_Function_G_of_t}, further implies that 
%----------;
%  Step 5  ;
%----------;
\begin{align}
\frac{\dot{\mathcal{F}}(t)}{\mathcal{F}(t)} = \mathcal{G}(t) \geq \mathcal{G}(t_1) = \frac{\dot{\mathcal{F}}(t_1)}{\mathcal{F}(t_1)}
\quad \forall t \in [t_1,t_2)
\end{align}
Noting $\mathcal{F}(t) > 0$, we establish the following inequality: 
%----------;
%  Step 6  ;
%----------;
\begin{align}
\dot{\mathcal{F}}(t) \geq  \frac{\dot{\mathcal{F}}(t_1)}{\mathcal{F}(t_1)} \, \mathcal{F}(t)
\quad \forall t \in [t_1,t_2)
\end{align}
Invoking the Gronwall-Bellman inequality \citep{pachpatte1998inequalities}, we arrive at the desired estimate: 
%----------;
%  Step 7  ;
%----------;
\begin{align}
\mathcal{F}(t) \geq  \mathcal{F}(t_1) \, 
\exp\left[ \frac{\dot{\mathcal{F}}(t_1) \, (t - t_1)}{\mathcal{F}(t_1)}\right] \quad \forall t \in [t_1,t_2)
\end{align}
\end{proof}

We now return to the proof of the backward-in-time uniqueness theorem. 

%--------------------------------------------------;
%  Figure 2: Non-uniqueness of backward solutions  ;
%--------------------------------------------------;
\begin{figure}
    \includegraphics[scale=1.75]{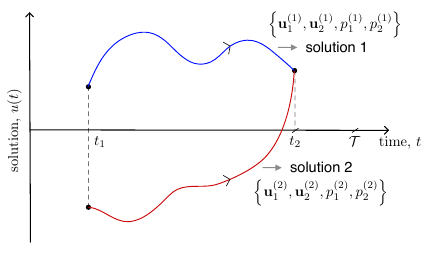}
    \caption{Illustrating the non-uniqueness of backward-in-time solutions. \label{Fig:Fig2_nonuniqueness}}
\end{figure}

%======================;
%  Proof of Theorem 1  ;
%======================;
\begin{proof}[\textbf{Proof of Theorem \ref{Thm:Backwards_Backward_uniqueness_theorem}}]
By virtue of Eq.~\eqref{Eqn:Backwards_theorem_condition_1}, we note: 
%--------------------;
%  Step 1: w(T) = 0  ;
%--------------------;
\begin{align}
    \mathbf{w}_{1}(\mathbf{x},\mathcal{T}) = \mathbf{0} 
    \quad \mathrm{and} \quad 
    \mathbf{w}_{2}(\mathbf{x},\mathcal{T}) = \mathbf{0}
    \quad \forall \mathbf{x} \in \Omega 
\end{align}

If $\mathbf{w}_1(\mathbf{x},t) = \mathbf{0}$ and $\mathbf{w}_2(\mathbf{x},t) = \mathbf{0}$ in the entire time interval $0 \leq t < \mathcal{T}$, we are done with the proof. On the contrary, if the two solutions are not identical, then there must exist a sub-interval $[t_1,t_2) \subset [0,\mathcal{T})$ such that 
%----------------------------;
%  Equation: Non-uniqueness  ;
%----------------------------;
\begin{align}
    \mathrm{either} \quad 
    \mathbf{w}_1(\mathbf{x},t) \neq \mathbf{0} 
    \quad \mathrm{or} \quad
    \mathbf{w}_2(\mathbf{x},t) \neq \mathbf{0}
\end{align}
with
\begin{align}
\mathbf{w}_1(\mathbf{x},t_2) = \mathbf{0} 
\quad \mathrm{and} \quad
\mathbf{w}_2(\mathbf{x},t_2) = \mathbf{0}
\end{align}
See \textbf{Fig.~\ref{Fig:Fig2_nonuniqueness}} for a pictorial description of the conjectured  non-uniqueness.  

Noting the definition for $\mathcal{E}(t)$ given by Eq.~\eqref{Eqn:Backwards_E_of_t}, the above two equations imply that 
\begin{align}
\label{Eqn:Backwards_E_of_t_in_t1_to_t2}
\mathcal{E}(t) > 0 \quad \forall t \in [t_1,t_2) 
\quad \mathrm{and} \quad 
\mathcal{E}(t_2) = 0
\end{align}
From Proposition \ref{Prop:Backward_Inequality_for_E} we have: 
\[
\left(\dot{\mathcal{E}}(t)\right)^2 \leq \mathcal{E}(t) \, \ddot{\mathcal{E}}(t)
\]
Consequently, Proposition \ref{Prop:Backwards_Exponential_inequality} offers the following inequality: 
\begin{align}
\mathcal{E}(t) \geq  \mathcal{E}(t_1) \, 
\exp\left[ \frac{\dot{\mathcal{E}}(t_1) \, (t - t_1)}{\mathcal{E}(t_1)}\right]  \quad \forall t \in [t_1,t_2)
\end{align}
Passing the limit $t \rightarrow t_2$ on both sides of the above inequality, and noting that $\mathcal{E}(t)>0$ and exponential function in positive, we infer that   
\begin{align}
\mathcal{E}(t_2) \geq  \mathcal{E}(t_1) \, 
\exp\left[ \frac{\dot{\mathcal{E}}(t_1) \, (t_2 - t_1)}{\mathcal{E}(t_1)}\right] > 0
\end{align}
which is a contradiction with the condition $\mathcal{E}(t_2) = 0$ (i.e., Eq.~\eqref{Eqn:Backwards_E_of_t_in_t1_to_t2}). Hence, we conclude that  
\begin{align}
\mathbf{w}_1(\mathbf{x},t) = \mathbf{0} 
\quad \mathrm{and} \quad 
\mathbf{w}_2(\mathbf{x},t) = \mathbf{0} 
\quad \forall \mathbf{x} \in \overline{\Omega}, \forall t \in [0,\mathcal{T}] 
\end{align}
meaning that 
\begin{align}
\mathbf{u}_1^{(1)}(\mathbf{x},t) = \mathbf{u}_1^{(2)}(\mathbf{x},t)
\quad \mathrm{and} \quad 
\mathbf{u}_2^{(1)}(\mathbf{x},t) = \mathbf{u}_2^{(2)}(\mathbf{x},t)
\quad \forall \mathbf{x} \in \overline{\Omega}, \forall t \in [0,\mathcal{T}] 
\end{align}

We now return to pressures. Since $\mathbf{w}_1(\mathbf{x},t) = \mathbf{0}$, Eq.~\eqref{Eqn:Backwards_wq_BoLM_1} necessitates that
\begin{align}
    \mathrm{grad}\big[q_1(\mathbf{x},t)\big] = \mathbf{0}
\end{align}
which further implies that 
\begin{align}
    q_1(\mathbf{x},t) = C_1 
\end{align}
Similarly, $\mathbf{w}_2(\mathbf{x},t) = \mathbf{0}$ and Eq.~\eqref{Eqn:Backwards_wq_BoLM_2} entail 
\begin{align}
    q_2(\mathbf{x},t) = C_2 
\end{align}
Equation \eqref{Eqn:Backwards_wq_BoM_1} requires that 
\begin{align}
    C_1 = C_2 = C
\end{align}
We have thus established that 
\begin{align} 
p_1^{(1)}(\mathbf{x},t) = p_1^{(2)}(\mathbf{x},t) + C 
\quad \mathrm{and} \quad 
p_2^{(1)}(\mathbf{x},t) = p_2^{(2)}(\mathbf{x},t) + C 
\end{align} 
If $\Gamma_1^{p} \neq 0$, then $C = 0$ on the account of Eq.~\eqref{Eqn:Backwards_wq_pBC_1}. Alternatively, if $\Gamma_2^{p} \neq 0$, then $C = 0$ because of Eq.~\eqref{Eqn:Backwards_wq_pBC_2}. This completes the proof.
\end{proof}

%================================;
%  Subsection: A remark on E(t)  ;
%================================;
\subsection{A remark on $\mathcal{E}(t)$}
\label{Subsec:Backwards_Remark_on_E_of_t}
Since $\mathcal{E}(t)$ played a crucial role in establishing the uniqueness of backward-in-time solutions, it is essential to profess the quantity is not a mysterious mathematical object but has strong mechanics underpinning. $\mathcal{E}(t)$ is based on the total kinetic energy of the fluid. If we take  $\mathbf{w}_1(\mathbf{x},t) = \mathbf{u}_1(\mathbf{x},t)$ and $\mathbf{w}_2(\mathbf{x},t) = \mathbf{u}_2(\mathbf{x},t)$ in Eq.~\eqref{Eqn:Backwards_E_of_t}, the resulting quantity is the total kinetic energy. 

We now show that the following quantity:
%----------------------------------;
%  Equation: Total kinetic energy  ;
%----------------------------------;
\begin{align}
    \label{Eqn:Backwards_K_of_t}
    \mathcal{K}(t)   
    = \int_{\Omega} \frac{1}{2} \frac{\gamma}{\phi_1(\mathbf{x})} \mathbf{u}_{1}(\mathbf{x},t) \bullet \mathbf{u}_{1}(\mathbf{x},t) \; \mathrm{d} \Omega 
    + \int_{\Omega} \frac{1}{2} \frac{\gamma}{\phi_2(\mathbf{x})} \mathbf{u}_{2}(\mathbf{x},t) \bullet \mathbf{u}_{2}(\mathbf{x},t) \; \mathrm{d} \Omega 
\end{align}
is, in fact, the total kinetic energy accounting flow in both networks.
The porosity factors (i.e., $\phi_1(\mathbf{x})$ and $\phi_2(\mathbf{x})$) do not make the said connection apparent. To clarify, we need to relate the seepage (i.e., true) velocities with the discharge (i.e., Darcy) velocities. Seepage velocity is the actual velocity of the fluid in the pore. If $\mathbf{v}_1(\mathbf{x},t)$ and $\mathbf{v}_2(\mathbf{x},t)$ denote the seepage velocities in the macro- and micro-networks, then these quantities are related to their counterpart discharge velocities as follows:
%--------------------------------;
%  Equation: Seepage velocities  ;
%--------------------------------;
\begin{align}
\mathbf{u}_{1}(\mathbf{x},t) = \phi_1(\mathbf{x}) \, \mathbf{v}_1(\mathbf{x},t)  
\quad \mathrm{and} \quad 
\mathbf{u}_{2}(\mathbf{x},t) = \phi_2(\mathbf{x}) \, \mathbf{v}_2(\mathbf{x},t)
\end{align}
Also, the differential volume occupied by the macro-pore within a differential volume $\mathrm{d}\Omega$ is given by 
%----------------------------------------;
%  Equation: Macro differential element  ;
%----------------------------------------;
\begin{align}
\mathrm{d} \Omega_{\mathrm{macro}} = \phi_1(\mathbf{x}) \, 
\mathrm{d} \Omega
\end{align}
Likewise, the differential volume occupied by the micro-pore is 
%----------------------------------------;
%  Equation: Micro differential element  ;
%----------------------------------------;
\begin{align}
\mathrm{d} \Omega_{\mathrm{micro}} = \phi_2(\mathbf{x}) \, 
\mathrm{d} \Omega
\end{align}

By substituting the above equations, the first integral in Eq.~\eqref{Eqn:Backwards_K_of_t} can be written as follows: 
%----------------------------;
%  Equation: First integral  ;
%----------------------------;
\begin{align}
\int_{\Omega} \frac{1}{2} \frac{\gamma}{\phi_1(\mathbf{x})} \mathbf{u}_{1}(\mathbf{x},t) \bullet \mathbf{u}_{1}(\mathbf{x},t) \; \mathrm{d} \Omega 
&= \int_{\Omega} \frac{1}{2} \phi_1(\mathbf{x}) \, \gamma \, \mathbf{v}_{1}(\mathbf{x},t) \bullet \mathbf{v}_{1}(\mathbf{x},t) \; \mathrm{d} \Omega
\nonumber \\
&= \int_{\Omega_{\mathrm{macro}}} \frac{1}{2} \, \gamma \, \mathbf{v}_{1}(\mathbf{x},t) \bullet \mathbf{v}_{1}(\mathbf{x},t) \; \mathrm{d} \Omega_{\mathrm{macro}} 
\end{align}
Clearly, the above expression is the kinetic energy of the fluid flowing through the macro-network. In like manner, the second integral amounts to   
%-----------------------------;
%  Equation: Second integral  ;
%-----------------------------;
\begin{align}
\int_{\Omega} \frac{1}{2} \frac{\gamma}{\phi_1(\mathbf{x})} \mathbf{u}_{2}(\mathbf{x},t) \bullet \mathbf{u}_{2}(\mathbf{x},t) \; \mathrm{d} \Omega 
&= \int_{\Omega_{\mathrm{micro}}} \frac{1}{2} \, \gamma \, \mathbf{v}_{2}(\mathbf{x},t) \bullet \mathbf{v}_{2}(\mathbf{x},t) \; \mathrm{d} \Omega_{\mathrm{micro}} 
\end{align}
which is the kinetic energy of the fluid flowing through the micro-network. Therefore, $\mathcal{K}(t)$ is the total kinetic energy of the fluid considering the flow in both pore-networks. 

The prior conversation is yet another demonstration of how \emph{mechanics} can aid in constructing proofs by selecting appropriate norms and estimating powerful bounds.

%*********************************************;
%                                             ;
%  NAME                                       ;
%    S4_Backwards_Reciprocity.tex             ;
%                                             ;
%  WRITTEN BY                                 ;
%    Kalyana B. Nakshatrala                   ;
%                                             ;
%*********************************************;
\section{A RECIPROCAL RELATION}
\label{Sec:S4_Backwards_Reciprocity}
For convenience, we group various quantities. We use the following notation: 
%--------------------------------------;
%  Equation: Grouping solution fields  ;
%--------------------------------------;
\begin{align}
    \mathcal{S}_{\mathrm{macro}} 
    := \Big\{\mathbf{u}_{1}(\mathbf{x},t),p_{1}(\mathbf{x},t)\Big\} 
    \quad \mathrm{and} \quad 
    \mathcal{S}_{\mathrm{micro}} 
    := \Big\{\mathbf{u}_{2}(\mathbf{x},t),p_{2}(\mathbf{x},t)\Big\} 
\end{align}
to denote collectively the \emph{solution} fields of the macro- and micro-networks, respectively. In like manner, we group the \emph{prescribed} quantities for the macro- and micro-networks as follows: 
%--------------------------------------------;
%  Equation: Grouping prescribed quantities  ;
%--------------------------------------------;
\begin{align}
\mathcal{P}_{\mathrm{macro}} &:= 
\Big\{\mathbf{b}_{1}(\mathbf{x},t),\mathbf{u}_{01}(\mathbf{x}),u_{n1}(\mathbf{x},t),p_{\mathrm{p}1}(\mathbf{x},t)\Big\} 
\quad \mathrm{and} \quad \nonumber \\
&\hspace{1in} \mathcal{P}_{\mathrm{micro}} := 
\Big\{\mathbf{b}_{2}(\mathbf{x},t),\mathbf{u}_{02}(\mathbf{x}),u_{n2}(\mathbf{x},t),p_{\mathrm{p}2}(\mathbf{x},t)\Big\} 
\end{align}
We also use the following notation to group various integrals of macro- and micro-networks: 
%---------------------------------------;
%  Equation: Macro and micro integrals  ;
%---------------------------------------;
\begin{align}
    %%%
    \label{Eqn:Backwards_macro_integrals}
    \Big\langle \mathcal{S}_{\mathrm{macro}};\mathcal{P}_{\mathrm{macro}}\Big\rangle_{\mathrm{macro}} 
    %%%
    &:= \int_{\Omega} \mathbf{u}_{1} \star  
    \gamma \, \mathbf{b}_{1} \; \mathrm{d} \Omega 
    %%%
    + \int_{\Omega} \frac{\gamma}{\phi_{1}(\mathbf{x})} \mathbf{u}_{1}(\mathbf{x},t) 
    \bullet  \mathbf{u}_{01}(\mathbf{x}) \; \mathrm{d} \Omega \nonumber \\
    %%%
    &\qquad -\int_{\Gamma^p_1}
    \big(\mathbf{u}_{1} \bullet \widehat{\mathbf{n}}(\mathbf{x}) \big) 
    \star p_{\mathrm{p}1}  \; \mathrm{d} \Gamma 
    %%%
    +\int_{\Gamma^u_1}
    p_{1} \star u_{n1} \; \mathrm{d} \Gamma \\
    %%%%%%%%%%%%%%%%%
    \label{Eqn:Backwards_micro_integrals}
    \Big\langle \mathcal{S}_{\mathrm{micro}};\mathcal{P}_{\mathrm{micro}}\Big\rangle_{\mathrm{micro}} 
    %%%
    &:= \int_{\Omega} \mathbf{u}_{2} \star  
    \gamma \, \mathbf{b}_{2} \; \mathrm{d} \Omega 
    %%%
    + \int_{\Omega} \frac{\gamma}{\phi_{2}(\mathbf{x})} \mathbf{u}_{2}(\mathbf{x},t) 
    \bullet \mathbf{u}_{02}(\mathbf{x}) \; \mathrm{d} \Omega \nonumber \\
    %%%
    &\qquad -\int_{\Gamma^p_2}
    \big(\mathbf{u}_{2} \bullet \widehat{\mathbf{n}}(\mathbf{x}) \big) 
    \star  p_{\mathrm{p}2}  \; \mathrm{d} \Gamma 
    %%%
    +\int_{\Gamma^u_2}
    p_{2} \star u_{n2} \; \mathrm{d} \Gamma 
    %%%
\end{align}

The reciprocal relation deals with two solutions corresponding to two different sets of prescribed quantities. To distinguish the two solutions and associated prescribed quantities, we use superscripts ``(1)'' and ``(2).'' For example, we use the following notation 
\begin{align}
    \mathcal{S}^{(1)}_{\mathrm{macro}} \equiv  
    \Big\{\mathbf{u}^{(1)}_1(\mathbf{x},t),p^{(1)}_1(\mathbf{x},t)\Big\}
\end{align}
to denote the \emph{macro} fields in the first solution. Likewise, 
\begin{align}
    \mathcal{S}^{(1)}_{\mathrm{micro}} \equiv  
    \Big\{\mathbf{u}^{(1)}_2(\mathbf{x},t),p^{(1)}_2(\mathbf{x},t)\Big\}
\end{align}
denotes the \emph{micro} fields in the first solution. Similarly, the prescribed quantities under the first solution are grouped as follows: 
%---------------------------------------------;
%  Equation: Prescribed macro first solution  ;
%---------------------------------------------;
\begin{align}
\mathcal{P}_{\mathrm{macro}}^{(1)} \equiv  
\Big\{\mathbf{b}_{1}^{(1)}(\mathbf{x},t),\mathbf{u}_{01}^{(1)}(\mathbf{x}),u_{n1}^{(1)}(\mathbf{x},t),p_{\mathrm{p}1}^{(1)}(\mathbf{x},t)\Big\} 
\end{align}
Analogous notations for $\mathcal{S}^{(2)}_{\mathrm{macro}}$, $\mathcal{S}^{(2)}_{\mathrm{micro}}$, $\mathcal{P}^{(1)}_{\mathrm{macro}}$, and $\mathcal{P}^{(1,2)}_{\mathrm{micro}}$ are manifest. With the above-introduced notation, the theorem below establishes the reciprocity for the DPP model in the transient regime. 

%=================================;
%  Theorem:  Reciprocal relation  ;
%---------------------------------;
\begin{theorem}[Dynamic reciprocity]
\label{Thm:Backwards_Reciprocal_relation}
Let 
\begin{align*}
    \Big\{\mathcal{S}^{(1)}_{\mathrm{macro}},\mathcal{S}^{(1)}_{\mathrm{micro}}\Big\} 
    \quad \mathrm{and} \quad 
    \Big\{\mathcal{S}^{(2)}_{\mathrm{macro}},\mathcal{S}^{(2)}_{\mathrm{micro}}\Big\} 
\end{align*}
be the solutions corresponding to the two sets of prescribed quantities: 
\begin{align*}
    \Big\{\mathcal{P}^{(1)}_{\mathrm{macro}},\mathcal{P}^{(1)}_{\mathrm{micro}}\Big\} 
    \quad \mathrm{and} \quad 
    \Big\{\mathcal{P}^{(2)}_{\mathrm{macro}},\mathcal{P}^{(2)}_{\mathrm{micro}}\Big\} 
\end{align*}
Then the following relation holds: 
%---------------------------------;
%  Equation: Reciprocal relation  ;
%---------------------------------;
\begin{align}
    \label{Eqn:Backwards_Reciprocal_relation}
    \Big\langle \mathcal{S}^{(2)}_{\mathrm{macro}};
    \mathcal{P}^{(1)}_{\mathrm{macro}}
    \Big\rangle_{\mathrm{macro}} 
    %%%
    + \Big\langle \mathcal{S}^{(2)}_{\mathrm{micro}};
    \mathcal{P}^{(1)}_{\mathrm{micro}}
    \Big\rangle_{\mathrm{micro}} 
    %%%
    = \Big\langle \mathcal{S}^{(1)}_{\mathrm{macro}};
    \mathcal{P}^{(2)}_{\mathrm{macro}}
    \Big\rangle_{\mathrm{macro}} 
    %%%
    + \Big\langle \mathcal{S}^{(1)}_{\mathrm{micro}};
    \mathcal{P}^{(2)}_{\mathrm{micro}}
    \Big\rangle_{\mathrm{micro}} 
    %%%
\end{align}
\end{theorem}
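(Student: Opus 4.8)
The plan is to adapt the classical Graffi--Gurtin convolution argument to the coupled DPP system, the novel ingredient being the treatment of the inter-network mass-transfer (the $\beta$-coupling). First I would convolve the macro- and micro-momentum balances \eqref{Eqn:Backwards_BoLM_1}--\eqref{Eqn:Backwards_BoLM_2} written for solution $(1)$ against the velocity fields $\mathbf{u}_1^{(2)}$ and $\mathbf{u}_2^{(2)}$ of solution $(2)$ (using the vector--vector convolution \eqref{Eqn:Backwards_CR_type_3}), integrate over $\Omega$, and add the two results. The right-hand sides then produce exactly the body-force integrals $\int_{\Omega} \mathbf{u}_1^{(2)}\star\gamma\,\mathbf{b}_1^{(1)}\,\mathrm{d}\Omega + \int_{\Omega} \mathbf{u}_2^{(2)}\star\gamma\,\mathbf{b}_2^{(1)}\,\mathrm{d}\Omega$, which are the leading terms of $\langle\mathcal{S}^{(2)}_{\mathrm{macro}};\mathcal{P}^{(1)}_{\mathrm{macro}}\rangle_{\mathrm{macro}} + \langle\mathcal{S}^{(2)}_{\mathrm{micro}};\mathcal{P}^{(1)}_{\mathrm{micro}}\rangle_{\mathrm{micro}}$.

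Next I would process the three groups of interior terms. The pressure-gradient terms are handled with the convolution Green's identity \eqref{Eqn:Backwards_Greens_identity}: splitting each boundary integral across the partitions \eqref{Eqn:Backwards_macro_gamma_partition}--\eqref{Eqn:Backwards_micro_gamma_partition} and inserting the boundary conditions \eqref{Eqn:Backwards_vBC_1}--\eqref{Eqn:Backwards_pBC_2} turns the surface contributions into the prescribed-data boundary convolutions appearing in the brackets, while the residual divergence integrals are rewritten via the mass balances \eqref{Eqn:Backwards_BoM_1}--\eqref{Eqn:Backwards_BoM_2}. The crucial observation is that the macro and micro mass-transfer contributions fuse into the single term $\frac{\beta}{\mu}\int_{\Omega}(p_1^{(2)}-p_2^{(2)})\star(p_1^{(1)}-p_2^{(1)})\,\mathrm{d}\Omega$, which is invariant under the swap $(1)\leftrightarrow(2)$ by commutativity \eqref{Eqn:Backwards_commutative}. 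Likewise, the viscous terms $\mu\,\mathbf{u}^{(2)}\star\mathbf{K}^{-1}\mathbf{u}^{(1)}$ are swap-invariant because each $\mathbf{K}_i^{-1}$ is symmetric and convolution commutes.

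The inertial terms are where the initial conditions enter. Applying \eqref{Eqn:Backwards_f_star_dgdt} to $\mathbf{u}_i^{(2)}\star\dot{\mathbf{u}}_i^{(1)}$ converts it into $\mathbf{u}_i^{(1)}\star\dot{\mathbf{u}}_i^{(2)}$ plus the jump contributions $\mathbf{u}_{0i}^{(2)}\bullet\mathbf{u}_i^{(1)}(\mathbf{x},t) - \mathbf{u}_i^{(2)}(\mathbf{x},t)\bullet\mathbf{u}_{0i}^{(1)}$. The second jump term cancels the initial-velocity integral $\int_{\Omega}\frac{\gamma}{\phi_i}\mathbf{u}_i^{(2)}(\mathbf{x},t)\bullet\mathbf{u}_{0i}^{(1)}\,\mathrm{d}\Omega$ carried inside the bracket definition, while the first jump term regenerates the initial-velocity integral of the swapped configuration. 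Carrying out the analogous commutativity step on the $\Gamma^{u}$ boundary convolutions shows that every term reorganizes into $\langle\mathcal{S}^{(1)}_{\mathrm{macro}};\mathcal{P}^{(2)}_{\mathrm{macro}}\rangle_{\mathrm{macro}} + \langle\mathcal{S}^{(1)}_{\mathrm{micro}};\mathcal{P}^{(2)}_{\mathrm{micro}}\rangle_{\mathrm{micro}}$, which is precisely the desired relation \eqref{Eqn:Backwards_Reciprocal_relation}.

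I expect the main obstacle to be bookkeeping rather than hard analysis: one must track, for each of the two networks and each boundary partition, whether a factor is a solution field or a prescribed datum of configuration $(1)$ or $(2)$, and verify that the signs produced by Green's identity match those in the bracket definitions \eqref{Eqn:Backwards_macro_integrals}--\eqref{Eqn:Backwards_micro_integrals}. The genuinely DPP-specific step---absent in single-network Darcy reciprocity---is confirming that the two mass-transfer terms arising from the coupled mass balances assemble into a single symmetric pressure-difference convolution; once that is secured, the remainder follows the standard convolution reciprocity template.
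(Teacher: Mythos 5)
Your proposal is correct and follows essentially the same route as the paper's proof: convolve each momentum balance of one solution against the other solution's velocity, eliminate the inertial terms together with the initial-velocity integrals via identity \eqref{Eqn:Backwards_f_star_dgdt}, kill the viscous terms by symmetry of $\mathbf{K}_i^{-1}$ plus commutativity, and reduce the pressure-gradient terms through Green's identity, the boundary conditions, and the mass balances so that the two networks' mass-transfer contributions combine into the swap-symmetric convolution $\tfrac{\beta}{\mu}\big(p_1^{(2)}-p_2^{(2)}\big)\star\big(p_1^{(1)}-p_2^{(1)}\big)$. The only cosmetic difference is that the paper phrases the argument as showing the difference of the two bracket sums vanishes, whereas you verify swap-invariance term by term; these are the same computation.
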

%================================;
%  Proof of dynamic reciprocity  ;
%--------------------------------;
\begin{proof}
The \emph{macro} fields of the \emph{first} solution (i.e., $\mathcal{S}_{\mathrm{macro}}^{(1)} \equiv \big\{\mathbf{u}^{(1)}_1(\mathbf{x},t),p^{(1)}_1(\mathbf{x},t)\big\}$) satisfy the following equation (cf. Eq.~\eqref{Eqn:Backwards_BoLM_1}): 
%----------------------------------------;
%  Step 1: BoLM of macro first solution  ;
%----------------------------------------;
\begin{align}
\label{Eqn:Backwards_reciprocity_proof_step_1}
\frac{\gamma}{\phi_{1}(\mathbf{x})} 
\frac{\partial \mathbf{u}_{1}^{(1)}}{\partial t}
+ \mu \, \mathbf{K}_1^{-1}(\mathbf{x}) \, \mathbf{u}_1^{(1)}
+ \mathrm{grad}\big[p^{(1)}_{1}\big]
= \gamma \, \mathbf{b}_{1}^{(1)}
\end{align}
Applying convolution to the above equation with $\mathbf{u}_1^{(2)}(\mathbf{x},t)$, we write  
%-------------------------------------------------;
%  Step 2: Convolving with macro second solution  ;
%-------------------------------------------------;
\begin{align}
\label{Eqn:Backwards_reciprocity_proof_step_2}
\mathbf{u}_{1}^{(2)} \star \gamma \, \mathbf{b}_{1}^{(1)}
%%%
= \mathbf{u}_{1}^{(2)} \star 
\frac{\gamma}{\phi_{1}(\mathbf{x})} 
\frac{\partial \mathbf{u}_{1}^{(1)}}{\partial t}
%%%
+ \mathbf{u}_{1}^{(2)} \star 
\mu \, \mathbf{K}_1^{-1}(\mathbf{x}) \, \mathbf{u}_1^{(1)}
%%%
+ \mathbf{u}_{1}^{(2)} \star 
\mathrm{grad}\big[p^{(1)}_{1}\big]
%%%
\end{align}
%----------------------;
%  Addition of a term  ;
%----------------------;
After adding the following term 
\[
\frac{\gamma}{\phi_1(\mathbf{x})} 
\mathbf{u}_1^{(2)}(\mathbf{x},t) \bullet 
\mathbf{u}_{01}^{(1)}(\mathbf{x})
\]
to both sides of Eq.~\eqref{Eqn:Backwards_reciprocity_proof_step_2}, we get the following: 
%---------------------------------;
%  Step 3: After adding the term  ;
%---------------------------------;
\begin{align}
\label{Eqn:Backwards_reciprocity_proof_step_3}
\mathbf{u}_{1}^{(2)} \star \gamma \, \mathbf{b}_{1}^{(1)}
%%%
+ \frac{\gamma}{\phi_1(\mathbf{x})} 
\mathbf{u}_1^{(2)}(\mathbf{x},t) \bullet 
\mathbf{u}_{01}^{(1)}(\mathbf{x})
%%%
&= \left\{\mathbf{u}_{1}^{(2)} \star 
\frac{\gamma}{\phi_{1}(\mathbf{x})} 
\frac{\partial \mathbf{u}_{1}^{(1)}}{\partial t}
%%%
+ \frac{\gamma}{\phi_1(\mathbf{x})} 
\mathbf{u}_1^{(2)}(\mathbf{x},t) \bullet 
\mathbf{u}_{01}^{(1)}(\mathbf{x})\right\} \nonumber \\
%%%
&\qquad + \mathbf{u}_{1}^{(2)} \star 
\mu \, \mathbf{K}_1^{-1}(\mathbf{x}) \, \mathbf{u}_1^{(1)}
%%%
+ \mathbf{u}_{1}^{(2)} \star 
\mathrm{grad}\big[p^{(1)}_{1}\big]
%%%
\end{align}

Carrying out similar steps (i.e., mimicking the calculations performed to get Eqs.~\eqref{Eqn:Backwards_reciprocity_proof_step_1}--\eqref{Eqn:Backwards_reciprocity_proof_step_3}) for the \emph{macro} fields of the \emph{second} solution (i.e., $\mathcal{S}_{\mathrm{macro}}^{(2)} \equiv \big\{\mathbf{u}^{(2)}_1(\mathbf{x},t),p^{(2)}_1(\mathbf{x},t)\big\}$), we arrive at the following: 
%-------------------------------------------;
%  Step 4: Final for macro second solution  ;
%-------------------------------------------;
\begin{align}
\label{Eqn:Backwards_reciprocity_proof_step_4}
\mathbf{u}_{1}^{(1)} \star \gamma \, \mathbf{b}_{1}^{(2)}
%%%
+ \frac{\gamma}{\phi_1(\mathbf{x})} 
\mathbf{u}_1^{(1)}(\mathbf{x},t) \bullet 
\mathbf{u}_{01}^{(2)}(\mathbf{x})
%%%
&= \left\{\mathbf{u}_{1}^{(1)} \star 
\frac{\gamma}{\phi_{1}(\mathbf{x})} 
\frac{\partial \mathbf{u}_{1}^{(2)}}{\partial t}
%%%
+ \frac{\gamma}{\phi_1(\mathbf{x})} 
\mathbf{u}_1^{(1)}(\mathbf{x},t) \bullet 
\mathbf{u}_{01}^{(2)}(\mathbf{x})\right\} \nonumber \\
%%%
&\qquad + \mathbf{u}_{1}^{(1)} \star 
\mu \, \mathbf{K}_1^{-1}(\mathbf{x}) \, \mathbf{u}_1^{(2)}
%%%
+ \mathbf{u}_{1}^{(1)} \star 
\mathrm{grad}\big[p_{1}^{(2)}\big]
%%%
\end{align}

Subtracting the above two equations, we obtain: 
%-----------------------------------------;
%  Step 5: Subtract macro of 1st and 2nd  ;
%-----------------------------------------;
\begin{align}
\label{Eqn:Backwards_reciprocity_proof_step_5}
&\left\{
\mathbf{u}_{1}^{(2)} \star \gamma \, \mathbf{b}_{1}^{(1)}
+ \frac{\gamma}{\phi_1(\mathbf{x})} 
\mathbf{u}_1^{(2)}(\mathbf{x},t) \bullet 
\mathbf{u}_{01}^{(1)}(\mathbf{x})
\right\}
%%%
- \left\{
\mathbf{u}_{1}^{(1)} \star \gamma \, \mathbf{b}_{1}^{(2)}
+ \frac{\gamma}{\phi_1(\mathbf{x})} 
\mathbf{u}_1^{(1)}(\mathbf{x},t) \bullet 
\mathbf{u}_{01}^{(2)}(\mathbf{x})
\right\} \nonumber \\ 
%%%
&\qquad \qquad = \left\{\mathbf{u}_{1}^{(2)} \star 
\frac{\gamma}{\phi_{1}(\mathbf{x})} 
\frac{\partial \mathbf{u}_{1}^{(1)}}{\partial t} 
- \mathbf{u}_{1}^{(1)} \star 
\frac{\gamma}{\phi_{1}(\mathbf{x})} 
\frac{\partial \mathbf{u}_{1}^{(2)}}{\partial t} \right\} \nonumber \\ 
%%%
&\qquad \qquad \qquad + \left\{\mathbf{u}_{1}^{(2)} \star 
\mu \, \mathbf{K}_1^{-1}(\mathbf{x}) \, \mathbf{u}_1^{(1)} 
- \mathbf{u}_{1}^{(1)} \star 
\mu \, \mathbf{K}_1^{-1}(\mathbf{x}) \, \mathbf{u}_1^{(2)} \right\} \nonumber \\ 
%%%
&\qquad \qquad \qquad + \mathbf{u}_{1}^{(2)} \star 
\mathrm{grad}\big[p^{(1)}_{1}\big]
%%%
- \mathbf{u}_{1}^{(1)} \star 
\mathrm{grad}\big[p^{(2)}_{1}\big]
%%%
\end{align}
Equation \eqref{Eqn:Backwards_f_star_dgdt} implies that the first term in the curly brackets on the right side of Eq.~\eqref{Eqn:Backwards_reciprocity_proof_step_5} is zero: 
%------------------------------;
%  Step 6: Using product rule  ;
%------------------------------;
\begin{align}
\label{Eqn:Backwards_reciprocity_proof_step_6}
\mathbf{u}_{1}^{(2)} \star 
\frac{\gamma}{\phi_1(\mathbf{x})} 
\frac{\partial \mathbf{u}_1^{(1)}}{\partial t} 
%%%
- \mathbf{u}_{1}^{(1)} \star 
\frac{\gamma}{\phi_1(\mathbf{x})} 
\frac{\partial \mathbf{u}_1^{(1)}}{\partial t}
%%%%%%
&= \mathbf{u}_{1}^{(2)}(\mathbf{x},0) \bullet
\frac{\gamma}{\phi_{1}(\mathbf{x})} 
\mathbf{u}_{1}^{(1)}(\mathbf{x},t)
- \mathbf{u}_{1}^{(1)}(\mathbf{x},t) 
\bullet 
\frac{\gamma}{\phi_{1}(\mathbf{x})} 
\mathbf{u}_{1}^{(2)}(\mathbf{x},0)  \nonumber \\ 
%%%%%%
&= \mathbf{u}_{01}^{(2)}(\mathbf{x}) \bullet
\frac{\gamma}{\phi_{1}(\mathbf{x})} 
\mathbf{u}_{1}^{(1)}(\mathbf{x},t) 
- \mathbf{u}_{1}^{(1)}(\mathbf{x},t) 
\bullet 
\frac{\gamma}{\phi_{1}(\mathbf{x})} 
\mathbf{u}_{01}^{(2)}(\mathbf{x}) \nonumber \\
%%%%%%
&= 0 
\end{align}
In view of the symmetry of the macro permeability tensor $\mathbf{K}_1(\mathbf{x})$, the commutative property of convolutions (i.e., Eq.~\eqref{Eqn:Backwards_commutative}), and the definition of the transpose of a tensor, the second term in curly brackets on the right side of Eq.~\eqref{Eqn:Backwards_reciprocity_proof_step_5} also vanishes. To wit, 
%--------------------------------------;
%  Step 7: Simplify second curly term  ;
%--------------------------------------;
\begin{alignat}{2}
    \label{Eqn:Backwards_reciprocity_proof_step_7}
    \mathbf{u}_{1}^{(2)} \star 
    \mu \, \mathbf{K}_1^{-1}(\mathbf{x}) \, \mathbf{u}_1^{(1)} 
    %%%
    &= \mu \, \mathbf{K}_1^{-\mathrm{T}}(\mathbf{x}) \, \mathbf{u}_{1}^{(2)} \star 
    \mathbf{u}_1^{(1)} 
    &&\quad \mbox{[definition of transpose]} \nonumber \\
    %%%
    &= \mu \, \mathbf{K}_1^{-\mathrm{1}}(\mathbf{x}) \, \mathbf{u}_{1}^{(2)} \star 
    \mathbf{u}_1^{(1)} 
    &&\quad \mbox{[symmetry of $\mathbf{K}_1(\mathbf{x})$]} \nonumber \\
    %%%
    &= \mathbf{u}_1^{(1)} \star \mu \, \mathbf{K}_1^{-\mathrm{1}}(\mathbf{x}) \, \mathbf{u}_{1}^{(2)} 
    &&\quad \mbox{[commutative property of convolutions]} 
    %%%
\end{alignat}
On the account of Eqs.~\eqref{Eqn:Backwards_reciprocity_proof_step_6} and \eqref{Eqn:Backwards_reciprocity_proof_step_7}, Eq.~\eqref{Eqn:Backwards_reciprocity_proof_step_5} is equivalent to the following: 
%-----------------------------------------;
%  Step 8: Simplify entire curly bracket  ; 
%-----------------------------------------;
\begin{align}
\label{Eqn:Backwards_reciprocity_proof_step_8}
&\left\{
\mathbf{u}_{1}^{(2)} \star \gamma \, \mathbf{b}_{1}^{(1)}
+ \frac{\gamma}{\phi_1(\mathbf{x})} 
\mathbf{u}_1^{(2)}(\mathbf{x},t) \bullet 
\mathbf{u}_{01}^{(1)}(\mathbf{x})
\right\}
%%%
- \left\{
\mathbf{u}_{1}^{(1)} \star \gamma \, \mathbf{b}_{1}^{(2)}
+ \frac{\gamma}{\phi_1(\mathbf{x})} 
\mathbf{u}_1^{(1)}(\mathbf{x},t) \bullet 
\mathbf{u}_{01}^{(2)}(\mathbf{x})
\right\} \nonumber \\ 
%%%
&\qquad \qquad \qquad = \mathbf{u}_{1}^{(2)} \star 
\mathrm{grad}\big[p^{(1)}_{1}\big]
%%%
- \mathbf{u}_{1}^{(1)} \star 
\mathrm{grad}\big[p^{(2)}_{1}\big]
%%%
\end{align}

Noting the terms in Eq.~\eqref{Eqn:Backwards_Reciprocal_relation}, we set out to simplify the following macro-related difference: 
%------------------------;
%  Equation: Difference  ;
%------------------------;
\begin{align*}
    \Big\langle \mathcal{S}^{(2)}_{\mathrm{macro}};
    \mathcal{P}^{(1)}_{\mathrm{macro}}
    \Big\rangle_{\mathrm{macro}} 
    %%%
    - \Big\langle \mathcal{S}^{(1)}_{\mathrm{macro}};
    \mathcal{P}^{(2)}_{\mathrm{macro}}
    \Big\rangle_{\mathrm{macro}} 
\end{align*}
Using Eq.~\eqref{Eqn:Backwards_macro_integrals}, we expand this  difference as follows: 
%---------------------------------;
%  Step 9: Expand the difference  ;
%---------------------------------;
\begin{align}
    \label{Eqn:Backwards_reciprocity_proof_step_9}
    \Big\langle \mathcal{S}^{(2)}_{\mathrm{macro}};
    \mathcal{P}^{(1)}_{\mathrm{macro}}
    \Big\rangle_{\mathrm{macro}} 
    %%%
    - \Big\langle \mathcal{S}^{(1)}_{\mathrm{macro}};
    \mathcal{P}^{(2)}_{\mathrm{macro}}
    \Big\rangle_{\mathrm{macro}} 
    %%%
    &=  \int_{\Omega} \left\{
\mathbf{u}_{1}^{(2)} \star \gamma \, \mathbf{b}_{1}^{(1)}
+ \frac{\gamma}{\phi_1(\mathbf{x})} 
\mathbf{u}_1^{(2)}(\mathbf{x},t) \bullet 
\mathbf{u}_{01}^{(1)}(\mathbf{x})
\right\} \mathrm{d} \Omega \nonumber \\
%%%
&- \int_{\Omega} \left\{
\mathbf{u}_{1}^{(1)} \star \gamma \, \mathbf{b}_{1}^{(2)}
+ \frac{\gamma}{\phi_1(\mathbf{x})} 
\mathbf{u}_1^{(1)}(\mathbf{x},t) \bullet 
\mathbf{u}_{01}^{(2)}(\mathbf{x})
\right\} \mathrm{d} \Omega \nonumber \\
%%%
&-\int_{\Gamma^p_1}
\big(\mathbf{u}_{1}^{(2)} \bullet \widehat{\mathbf{n}}(\mathbf{x}) \big) 
\star p_{\mathrm{p}1}^{(1)}  \; \mathrm{d} \Gamma 
%%%
+\int_{\Gamma^u_1}
p_{1}^{(2)} \star u_{n1}^{(1)} \; \mathrm{d} \Gamma 
\nonumber \\
%%%
&+\int_{\Gamma^p_1}
\big(\mathbf{u}_{1}^{(1)} \bullet \widehat{\mathbf{n}}(\mathbf{x}) \big) 
\star p_{\mathrm{p}1}^{(2)}  \; \mathrm{d} \Gamma 
%%%
-\int_{\Gamma^u_1}
p_{1}^{(1)} \star u_{n1}^{(2)} \; \mathrm{d} \Gamma 
%%%
\end{align}
Equation~\eqref{Eqn:Backwards_reciprocity_proof_step_8} enables us to write the above equation as follows: 
%------------------------------------;
%  Step 10: Simplify the difference  ;
%------------------------------------;
\begin{align}
    \label{Eqn:Backwards_reciprocity_proof_step_10}
    \Big\langle \mathcal{S}^{(2)}_{\mathrm{macro}};
    \mathcal{P}^{(1)}_{\mathrm{macro}}
    \Big\rangle_{\mathrm{macro}} 
    %%%
    - \Big\langle \mathcal{S}^{(1)}_{\mathrm{macro}};
    \mathcal{P}^{(2)}_{\mathrm{macro}}
    \Big\rangle_{\mathrm{macro}} 
    %%%
    &= \int_{\Omega} \mathbf{u}_1^{(2)} \star 
    \mathrm{grad}\big[p_1^{(1)}\big] \, 
    \mathrm{d} \Omega 
    - \int_{\Omega} \mathbf{u}_1^{(1)} \star 
    \mathrm{grad}\big[p_1^{(2)}\big] \, 
    \mathrm{d} \Omega \nonumber \\
%%%
&-\int_{\Gamma^p_1}
\big(\mathbf{u}_{1}^{(2)} \bullet \widehat{\mathbf{n}}(\mathbf{x}) \big) 
\star p_{\mathrm{p}1}^{(1)}  \; \mathrm{d} \Gamma 
%%%
+\int_{\Gamma^u_1}
p_{1}^{(2)} \star u_{n1}^{(1)} \; \mathrm{d} \Gamma 
\nonumber \\
%%%
&+\int_{\Gamma^p_1}
\big(\mathbf{u}_{1}^{(1)} \bullet \widehat{\mathbf{n}}(\mathbf{x}) \big) 
\star p_{\mathrm{p}1}^{(2)}  \; \mathrm{d} \Gamma 
%%%
-\int_{\Gamma^u_1}
p_{1}^{(1)} \star u_{n1}^{(2)} \; \mathrm{d} \Gamma 
%%%
\end{align}
Applying Eq.~\eqref{Eqn:Backwards_Greens_identity} on the first two terms on the right side of the above equation, we write: 
%--------------------------------------;
%  Step 11: Applying Green's identity  ;
%--------------------------------------;
\begin{align}
    \label{Eqn:Backwards_reciprocity_proof_step_11}
    \Big\langle \mathcal{S}^{(2)}_{\mathrm{macro}};
    \mathcal{P}^{(1)}_{\mathrm{macro}}
    \Big\rangle_{\mathrm{macro}} 
    %%%
    - \Big\langle \mathcal{S}^{(1)}_{\mathrm{macro}};
    \mathcal{P}^{(2)}_{\mathrm{macro}}
    \Big\rangle_{\mathrm{macro}} 
    %%%
    &= \int_{\partial \Omega} 
    \left(\mathbf{u}_{1}^{(2)} \bullet \widehat{\mathbf{n}}(\mathbf{x}) \right) 
    \star p_{1}^{(1)} \, \mathrm{d} \Gamma 
    - \int_{\Omega} 
    \mathrm{div}\big[\mathbf{u}_{1}^{(2)}\big] \star p_{1}^{(1)} \, \mathrm{d} \Omega 
    \nonumber \\
    %%%
    &- \int_{\partial \Omega} 
    \left(\mathbf{u}_{1}^{(1)} \bullet 
    \widehat{\mathbf{n}}(\mathbf{x}) \right) 
    \star p_{1}^{(2)} \, 
    \mathrm{d} \Gamma  
    + \int_{\Omega} 
    \mathrm{div}\big[\mathbf{u}_{1}^{(1)}\big] 
    \star p_{1}^{(2)} \, 
    \mathrm{d} \Omega 
    \nonumber \\
    %%%
    &-\int_{\Gamma^p_1}
    \big(\mathbf{u}_{1}^{(2)} \bullet \widehat{\mathbf{n}}(\mathbf{x}) \big) 
    \star p_{\mathrm{p}1}^{(1)}  \; \mathrm{d} \Gamma 
    %%%
    +\int_{\Gamma^u_1}
    p_{1}^{(2)} \star u_{n1}^{(1)} \; \mathrm{d} \Gamma \nonumber \\
    %%%
    &+\int_{\Gamma^p_1}
    \big(\mathbf{u}_{1}^{(1)} \bullet \widehat{\mathbf{n}}(\mathbf{x}) \big) 
    \star p_{\mathrm{p}1}^{(2)}  \; \mathrm{d} \Gamma 
    %%%
    -\int_{\Gamma^u_1}
    p_{1}^{(1)} \star u_{n1}^{(2)} \; \mathrm{d} \Gamma 
    %%%
\end{align}
Noting the partition of the boundary \eqref{Eqn:Backwards_macro_gamma_partition}, and invoking the boundary conditions for macro-network (i.e., Eqs.~\eqref{Eqn:Backwards_vBC_1} and \eqref{Eqn:Backwards_pBC_1}) and the commutative property of convolutions \eqref{Eqn:Backwards_commutative}, we simplify the above equation as follows: 
%----------------------------------------;
%  Step 12: Partition of macro boundary  ;
%----------------------------------------;
\begin{align}
    \label{Eqn:Backwards_reciprocity_proof_step_12}
    \Big\langle \mathcal{S}^{(2)}_{\mathrm{macro}};
    \mathcal{P}^{(1)}_{\mathrm{macro}}
    \Big\rangle_{\mathrm{macro}} 
    %%%
    - \Big\langle \mathcal{S}^{(1)}_{\mathrm{macro}};
    \mathcal{P}^{(2)}_{\mathrm{macro}}
    \Big\rangle_{\mathrm{macro}} 
    %%%
    = -\int_{\Omega} \mathrm{div}\big[\mathbf{u}_{1}^{(2)}\big] \star p_{1}^{(1)} \, \mathrm{d} \Omega 
    %%%
    + \int_{\Omega} 
    \mathrm{div}\big[\mathbf{u}_{1}^{(1)}\big] \star 
    p^{(2)} \, \mathrm{d} \Omega 
    %%%
\end{align}
The mass balance for the macro-network (i.e., Eq.~\eqref{Eqn:Backwards_BoM_1}) allows us to write the following: 
%----------------------------------;
%  Step 13: Invoking mass balance  ;
%----------------------------------;
\begin{align}
    \label{Eqn:Backwards_reciprocity_proof_step_13}
    \Big\langle \mathcal{S}^{(2)}_{\mathrm{macro}};
    \mathcal{P}^{(1)}_{\mathrm{macro}}
    \Big\rangle_{\mathrm{macro}} 
    %%%
    - \Big\langle \mathcal{S}^{(1)}_{\mathrm{macro}};
    \mathcal{P}^{(2)}_{\mathrm{macro}}
    \Big\rangle_{\mathrm{macro}} 
    %%%
    &= \int_{\Omega} \frac{\beta}{\mu} \; 
    \Big(p^{(2)}_1 - p^{(2)}_2 \Big) \star 
    p^{(1)}_1 \; \mathrm{d} \Omega \nonumber \\
    %%%
    &\qquad - \int_{\Omega} \frac{\beta}{\mu} \; 
    \Big(p^{(1)}_1 - p^{(1)}_2 \Big) \star 
    p^{(2)}_1 \; \mathrm{d} \Omega 
    %%%
\end{align}

We now simplify the remaining terms in Eq.~\eqref{Eqn:Backwards_Reciprocal_relation}---the micro-related difference:
%--------------------------------------;
%  Equation: Micro-related difference  ;
%--------------------------------------;
\begin{align*}
    \Big\langle \mathcal{S}^{(2)}_{\mathrm{micro}};
    \mathcal{P}^{(1)}_{\mathrm{micro}}
    \Big\rangle_{\mathrm{micro}} 
    %%%
    - \Big\langle \mathcal{S}^{(1)}_{\mathrm{micro}};
    \mathcal{P}^{(2)}_{\mathrm{micro}}
    \Big\rangle_{\mathrm{micro}} 
    %%%
\end{align*}
Following a similar procedure to the above (mimicking the steps in writing Eqs.~\eqref{Eqn:Backwards_reciprocity_proof_step_1}---\eqref{Eqn:Backwards_reciprocity_proof_step_13}), we arrive at the following expression for the micro-related difference: 
%--------------------------------------------;
%  Step 14: Expression for micro difference  ;
%--------------------------------------------;
\begin{align}
    \label{Eqn:Backwards_reciprocity_proof_step_14}
    \Big\langle \mathcal{S}^{(2)}_{\mathrm{micro}};
    \mathcal{P}^{(1)}_{\mathrm{micro}}
    \Big\rangle_{\mathrm{micro}} 
    %%%
    - \Big\langle \mathcal{S}^{(1)}_{\mathrm{micro}};
    \mathcal{P}^{(2)}_{\mathrm{micro}}
    \Big\rangle_{\mathrm{micro}} 
    %%%
    &= -\int_{\Omega} \frac{\beta}{\mu} \; 
    \Big(p^{(2)}_1 - p^{(2)}_2 \Big) \star 
    p^{(1)}_2 \; \mathrm{d} \Omega \nonumber \\
    %%%
    &\qquad + \int_{\Omega} \frac{\beta}{\mu} \; 
    \Big(p^{(1)}_1 - p^{(1)}_2 \Big) \star 
    p^{(2)}_2 \; \mathrm{d} \Omega 
    %%%
\end{align}
The sign incongruity on the right sides of Eqs.~\eqref{Eqn:Backwards_reciprocity_proof_step_13} and \eqref{Eqn:Backwards_reciprocity_proof_step_14} is because of the sign disparity related to the mass transfer between the macro- and micro-networks (cf. Eqs.~\eqref{Eqn:Backwards_BoM_1} and \eqref{Eqn:Backwards_BoM_2}). 
Adding Eqs.~\eqref{Eqn:Backwards_reciprocity_proof_step_13} and \eqref{Eqn:Backwards_reciprocity_proof_step_14}, we infer the following:
%-----------------------------------------------;
%  Step 15: Adding macro and micro differences  ;
%-----------------------------------------------;
\begin{align}
    \label{Eqn:Backwards_reciprocity_proof_step_15}
    &\Big\langle \mathcal{S}^{(2)}_{\mathrm{macro}};
    \mathcal{P}^{(1)}_{\mathrm{macro}}
    \Big\rangle_{\mathrm{macro}} 
    %%%
    - \Big\langle \mathcal{S}^{(1)}_{\mathrm{macro}};
    \mathcal{P}^{(2)}_{\mathrm{macro}}
    \Big\rangle_{\mathrm{macro}} 
    %%%
    +\Big\langle \mathcal{S}^{(2)}_{\mathrm{micro}};
    \mathcal{P}^{(1)}_{\mathrm{micro}}
    \Big\rangle_{\mathrm{micro}} 
    %%%
    - \Big\langle \mathcal{S}^{(1)}_{\mathrm{micro}};
    \mathcal{P}^{(2)}_{\mathrm{micro}}
    \Big\rangle_{\mathrm{micro}} \nonumber \\
    %%%
    &\qquad \qquad = \int_{\Omega} \frac{\beta}{\mu} \; 
    \Big(p^{(2)}_1 - p^{(2)}_2 \Big) \star 
    \Big(p^{(1)}_1 - p^{(1)}_2\Big) \; \mathrm{d} \Omega 
    %%%
    - \int_{\Omega} \frac{\beta}{\mu} \; 
    \Big(p^{(1)}_1 - p^{(1)}_2 \Big) \star 
    \Big(p^{(2)}_1 - p^{(2)}_2\Big) \; \mathrm{d} \Omega \nonumber \\ 
    %%%
    & \qquad \qquad = 0
    %%%
\end{align}
Rearranging the terms, we have established the desired relationship: 
%-------------------------;
%  Step 16: Final result  ;
%-------------------------;
\begin{align}
    \label{Eqn:Backwards_reciprocity_proof_step_16}
    \Big\langle \mathcal{S}^{(2)}_{\mathrm{macro}};
    \mathcal{P}^{(1)}_{\mathrm{macro}}
    \Big\rangle_{\mathrm{macro}} 
    %%%
    + \Big\langle \mathcal{S}^{(2)}_{\mathrm{micro}};
    \mathcal{P}^{(1)}_{\mathrm{micro}}
    \Big\rangle_{\mathrm{micro}} 
    %%%
    = \Big\langle \mathcal{S}^{(1)}_{\mathrm{macro}};
    \mathcal{P}^{(2)}_{\mathrm{macro}}
    \Big\rangle_{\mathrm{macro}} 
    %%%
    + \Big\langle \mathcal{S}^{(1)}_{\mathrm{micro}};
    \mathcal{P}^{(2)}_{\mathrm{micro}}
    \Big\rangle_{\mathrm{micro}} 
    %%%
\end{align}
%%%
\end{proof}

%*********************************************;
%                                             ;
%  NAME                                       ;
%    S5_Backwards_Variational.tex             ;
%                                             ;
%  WRITTEN BY                                 ;
%    Kalyana B. Nakshatrala                   ;
%                                             ;
%*********************************************;
\section{A VARIATIONAL PRINCIPLE}
\label{Sec:S5_Backwards_Variational}

Although the DPP model has first-order time derivatives, the model still enjoys a variational principle, as shown below. We first establish a useful intermediate result. 

%==============================;
%  Lemma: Intermediate result  ;
%==============================;
\begin{lemma}
\label{Eqn:Lemma_variational_intermediate_result}
A solution of the original governing equations \eqref{Eqn:Backwards_BoLM_1}--\eqref{Eqn:Backwards_vIC_2} is also a solution of the following and \emph{vice versa}: 
%---------------------------;
%  Equation: Equivalent GE  ;
%---------------------------;
\begin{subequations}
\begin{align}
    %%%
    \label{Eqn:Backwards_equivalent_BoLM_1}
    &\frac{\gamma}{\phi_1(\mathbf{x})} \mathbf{u}_1(\mathbf{x},t) 
    + 1 \star \mu \, \mathbf{K}_{1}^{-1}(\mathbf{x}) \, \mathbf{u}_{1} + 1 \star \mathrm{grad}[p_{1}] = 1 \star \gamma \, \mathbf{b}_{1} + \frac{\gamma}{\phi_{1}(\mathbf{x})} \mathbf{u}_{01}(\mathbf{x}) \\ 
    %%%
    \label{Eqn:Backwards_equivalent_BoLM_2}
    &\frac{\gamma}{\phi_2(\mathbf{x})} \mathbf{u}_2(\mathbf{x},t)  
    + 1 \star \mu \, \mathbf{K}_{2}^{-1}(\mathbf{x}) \, \mathbf{u}_{2} + 1 \star \mathrm{grad}[p_{2}] 
    = 1 \star \gamma \, \mathbf{b}_{2} 
    + \frac{\gamma}{\phi_{2}(\mathbf{x})} \mathbf{u}_{02}(\mathbf{x}) \\
    %%%
    \label{Eqn:Backwards_equivalent_BoM_1}
    &1 \star \mathrm{div}[\mathbf{u}_1] = -1 \star \frac{\beta}{\mu} \, \big(p_1 - p_2\big)  \\
    %%%
    \label{Eqn:Backwards_equivalent_BoM_2}
    &1 \star \mathrm{div}[\mathbf{u}_2] = +1 \star \frac{\beta}{\mu} \, \big(p_1 - p_2\big) \\
    %%%
    & 1 \star \mathbf{u}_1 \bullet \widehat{\mathbf{n}}(\mathbf{x}) = 1 \star u_{n1} \\
    & 1 \star \mathbf{u}_2 \bullet 
    \widehat{\mathbf{n}}(\mathbf{x}) = 1 \star u_{n2} \\ 
    & 1 \star p_1 = 1 \star p_{\mathrm{p}1} \\
    \label{Eqn:Backwards_equivalent_pBC_2}
    & 1 \star p_2 = 1 \star p_{\mathrm{p}2}
    %%%
\end{align}
\end{subequations}
\end{lemma}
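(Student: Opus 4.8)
The plan is to exploit the fact that the operator $1\star(\cdot)$, which by definition integrates a field in time from $0$ to $t$, is essentially inverted by the partial time derivative: the Leibniz-rule identity \eqref{Eqn:Backwards_convolutions_Leibniz_rule} gives $\partial(1\star\mathbf{f})/\partial t=\mathbf{f}$. I would therefore prove the two implications separately, passing from the original system \eqref{Eqn:Backwards_BoLM_1}--\eqref{Eqn:Backwards_vIC_2} to the convolved system \eqref{Eqn:Backwards_equivalent_BoLM_1}--\eqref{Eqn:Backwards_equivalent_pBC_2} by applying $1\star(\cdot)$ in one direction, and returning by applying $\partial/\partial t$ in the other.

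For the forward implication (original $\Rightarrow$ convolved), I would apply $1\star(\cdot)$ to each original equation. The only term requiring care is the inertial term of the momentum balance \eqref{Eqn:Backwards_BoLM_1}: integrating it in time yields
\begin{align*}
1\star\frac{\gamma}{\phi_1(\mathbf{x})}\frac{\partial\mathbf{u}_1}{\partial t}
=\frac{\gamma}{\phi_1(\mathbf{x})}\big(\mathbf{u}_1(\mathbf{x},t)-\mathbf{u}_1(\mathbf{x},0)\big),
\end{align*}
whereupon the prescribed initial condition \eqref{Eqn:Backwards_vIC_1}, $\mathbf{u}_1(\mathbf{x},0)=\mathbf{u}_{01}(\mathbf{x})$, lets me replace this by $\tfrac{\gamma}{\phi_1}\mathbf{u}_1-\tfrac{\gamma}{\phi_1}\mathbf{u}_{01}$. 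Transferring the $\mathbf{u}_{01}$ piece to the right-hand side reproduces \eqref{Eqn:Backwards_equivalent_BoLM_1}; the remaining terms, having time-independent coefficients, simply acquire a $1\star$ prefactor. The identical manipulation on \eqref{Eqn:Backwards_BoLM_2} gives \eqref{Eqn:Backwards_equivalent_BoLM_2}, while applying $1\star(\cdot)$ directly to the mass balances \eqref{Eqn:Backwards_BoM_1}--\eqref{Eqn:Backwards_BoM_2} and to the boundary conditions \eqref{Eqn:Backwards_vBC_1}--\eqref{Eqn:Backwards_pBC_2} produces the remaining convolved relations verbatim.

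For the reverse implication (convolved $\Rightarrow$ original), I would differentiate each convolved equation in time. By \eqref{Eqn:Backwards_convolutions_Leibniz_rule} every $1\star(\cdot)$ term returns its integrand, and the constant-in-time term $\tfrac{\gamma}{\phi_1}\mathbf{u}_{01}$ differentiates to zero, so $\partial/\partial t$ applied to \eqref{Eqn:Backwards_equivalent_BoLM_1} recovers \eqref{Eqn:Backwards_BoLM_1}, and analogously for the others. The subtle point---and the step I expect to be the crux---is recovering the initial velocity conditions, which differentiation alone cannot supply, since any two solutions differing only in their data at $t=0$ satisfy the same differentiated equation. Here I would instead evaluate the convolved momentum equation \eqref{Eqn:Backwards_equivalent_BoLM_1} at $t=0$: because $1\star\mathbf{f}$ vanishes at $t=0$ for every $\mathbf{f}$ (the integral collapses), all convolution terms drop out and the equation reduces to $\tfrac{\gamma}{\phi_1}\mathbf{u}_1(\mathbf{x},0)=\tfrac{\gamma}{\phi_1}\mathbf{u}_{01}(\mathbf{x})$, i.e. $\mathbf{u}_1(\mathbf{x},0)=\mathbf{u}_{01}(\mathbf{x})$, which is precisely \eqref{Eqn:Backwards_vIC_1}; the analogous evaluation of \eqref{Eqn:Backwards_equivalent_BoLM_2} yields \eqref{Eqn:Backwards_vIC_2}.

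The principal obstacle is thus conceptual rather than computational: the convolved momentum equations must simultaneously encode the differential equation and its initial data, and one must verify that the extra inhomogeneous term $\tfrac{\gamma}{\phi}\mathbf{u}_0$ is exactly what makes the $t=0$ evaluation reproduce the initial condition, while its vanishing time derivative leaves the recovered PDE unchanged. The remaining manipulations are routine given enough time-regularity of the solution fields to apply \eqref{Eqn:Backwards_convolutions_Leibniz_rule} and the linearity of $1\star(\cdot)$; these are implicit in the classical-solution setting assumed throughout, so I would simply invoke the stated convolution properties to justify the steps.
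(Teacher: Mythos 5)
Your proposal is correct and follows essentially the same route as the paper: the forward direction convolves each equation with unity and uses the initial conditions to handle the inertial terms, while the reverse direction recovers the PDEs by time differentiation via \eqref{Eqn:Backwards_convolutions_Leibniz_rule} and recovers the initial conditions by evaluating (in the paper, taking the limit $t \to 0$ of) the convolved momentum equations, where all $1\star(\cdot)$ terms vanish. No substantive differences.
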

%==================;
%  Proof of lemma  ;
%------------------;
\begin{proof}
    We first show that Eqs.~\eqref{Eqn:Backwards_equivalent_BoLM_1}--\eqref{Eqn:Backwards_equivalent_pBC_2} imply Eqs.~\eqref{Eqn:Backwards_BoLM_1}--\eqref{Eqn:Backwards_vIC_2}.

    To establish the initial velocity conditions (i.e., Eqs.~\eqref{Eqn:Backwards_vIC_1} and \eqref{Eqn:Backwards_vIC_2}), we apply limit $t \rightarrow 0$ on both sides of Eq.~\eqref{Eqn:Backwards_equivalent_BoLM_1}:
    %-----------;
    %  Step 1:  ;
    %-----------;
    \begin{align}
    \label{Eqn:Backwards_Lemma_step_1}
    %%%
    \lim_{t \rightarrow 0} \; \Big\{\frac{\gamma}{\phi_1(\mathbf{x})} \mathbf{u}_1(\mathbf{x},t) 
    + 1 \star \mu \, \mathbf{K}_{1}^{-1}(\mathbf{x}) \, \mathbf{u}_{1} + 1 \star \mathrm{grad}[p_{1}] 
    \Big\} 
    %%%
    = \lim_{t \rightarrow 0} \Big\{1 \star \gamma \, \mathbf{b}_{1} + \frac{\gamma}{\phi_{1}(\mathbf{x})} \mathbf{u}_{01}(\mathbf{x}) 
    \Big\} 
    %%%
    \end{align}
    The limiting process applied to the first term in the above equation gives rise to the following: 
    %-----------;
    %  Step 2:  ;
    %-----------;
    \begin{align}
    %%%
    \label{Eqn:Backwards_Lemma_step_2}
    \lim_{t \rightarrow 0} \; 
    \frac{\gamma}{\phi_1(\mathbf{x})} \mathbf{u}_1(\mathbf{x},t) 
    %%%
    = \frac{\gamma}{\phi_1(\mathbf{x})} 
    \lim_{t \rightarrow 0} \; \mathbf{u}_1(\mathbf{x},t) 
    %%%
    = \frac{\gamma}{\phi_1(\mathbf{x})} 
    \; \mathbf{u}_{1}(\mathbf{x},0) 
    %%%
    \end{align}
    For the second term, the limiting process amounts to the following: 
    %-----------;
    %  Step 3:  ;
    %-----------;
    \begin{align}
    %%%
    \lim_{t \rightarrow 0} \; 
    1 \star \mu \, \mathbf{K}_{1}^{-1}(\mathbf{x}) \, \mathbf{u}_1 
    %%%
    = \lim_{t \rightarrow 0} \; 
    \int_{0}^{t} \mu \, \mathbf{K}_{1}^{-1}(\mathbf{x}) \, \mathbf{u}_1(\mathbf{x},\tau) \; \mathrm{d} \tau  
    %%%
    = 0
    %%%
    \end{align}
    Similarly, the third and fourth terms vanish: 
    %-----------;
    %  Step 4:  ;
    %-----------;
    \begin{align}
    %%%
    \lim_{t \rightarrow 0} \; 
    1 \star \mathrm{grad}[p_1] = 0
    %%%
    \quad \mathrm{and} \quad 
    %%%
    \lim_{t \rightarrow 0} \; 
    1 \star \gamma \, \mathbf{b}_1 = 0
    %%%
    \end{align}
    Since the last term is independent of time, we have:  
    %-----------;
    %  Step 5:  ;
    %-----------;
    \begin{align}
    %%%
    \label{Eqn:Backwards_Lemma_step_5}
    \lim_{t \rightarrow 0} \; \frac{\gamma}{\phi_1(\mathbf{x})} \, \mathbf{u}_{01}(\mathbf{x}) 
    = \frac{\gamma}{\phi_1(\mathbf{x})} \, \mathbf{u}_{01}(\mathbf{x}) 
    %%%
    \end{align}
    By virtue of the above four equations \eqref{Eqn:Backwards_Lemma_step_2}--\eqref{Eqn:Backwards_Lemma_step_5}, Eq.~\eqref{Eqn:Backwards_Lemma_step_1} verifies the first initial velocity condition \eqref{Eqn:Backwards_vIC_1}:
    %-----------;
    %  Step 6:  ;
    %-----------;
    \begin{align}
    %%%
    \frac{\gamma}{\phi_1(\mathbf{x})} \, \mathbf{u}_{1}(\mathbf{x},0) 
    = \frac{\gamma}{\phi_1(\mathbf{x})} \, \mathbf{u}_{01}(\mathbf{x}) 
    %%%
    \; \implies \; 
    %%%
    \mathbf{u}_{1}(\mathbf{x},0) 
    = \mathbf{u}_{01}(\mathbf{x}) 
    %%%
    \end{align}
    Following a similar procedure, one can verify the other initial velocity condition \eqref{Eqn:Backwards_vIC_2}.

    Applying time derivative and using the property of convolutions given by Eq.~\eqref{Eqn:Backwards_convolutions_Leibniz_rule} (which is based on Leibniz integral rule) verifies the rest of the equations. We illustrate this procedure on the first equation \eqref{Eqn:Backwards_equivalent_BoLM_1} by taking the time derivative on both sides of the equation: 
    %-----------;
    %  Step 7:  ;
    %-----------;
    \begin{align}
    %%%
    \label{Eqn:Backwards_Lemma_step_7}
    \frac{\partial}{\partial t} 
    \left\{\frac{\gamma}{\phi_1(\mathbf{x})} \mathbf{u}_1(\mathbf{x},t) 
    + 1 \star \mu \, \mathbf{K}_{1}^{-1}(\mathbf{x}) \, \mathbf{u}_{1} + 1 \star \mathrm{grad}[p_{1}] \right\} 
    = \frac{\partial}{\partial t} 
    \left\{1 \star \gamma \, \mathbf{b}_{1} + \frac{\gamma}{\phi_{1}(\mathbf{x})} \mathbf{u}_{01}(\mathbf{x}) \right\} 
    %%%
    \end{align}
    The first term can be written as follows: 
    %-----------;
    %  Step 8:  ;
    %-----------;
    \begin{align}
        \label{Eqn:Backwards_Lemma_step_8}
        \frac{\partial}{\partial t}\left\{\frac{\gamma}{\phi_1(\mathbf{x})} \mathbf{u}_1(\mathbf{x},t)\right\} 
        = \frac{\gamma}{\phi_1(\mathbf{x})} 
        \frac{\partial \mathbf{u}_1(\mathbf{x},t)}{\partial t}
    \end{align}
    Invoking the property \eqref{Eqn:Backwards_convolutions_Leibniz_rule}, the second, third, and fourth terms can be written as follows:
    %-----------;
    %  Step 9:  ;
    %-----------;
    \begin{align}
    %%%
    \label{Eqn:Backwards_Lemma_step_9_a}
    &\frac{\partial  (1 \star \mu \, \mathbf{K}_{1}^{-1}(\mathbf{x}) \, \mathbf{u}_1)}{\partial t}
    = \mu \, \mathbf{K}_{1}^{-1}(\mathbf{x}) \, \mathbf{u}_1(\mathbf{x},t) \\ 
    %%%
    \label{Eqn:Backwards_Lemma_step_9_b}
    &\frac{\partial \big(1 \star \mathrm{grad}[p_1]\big)}{\partial t} 
    = \mathrm{grad}\big[p_1(\mathbf{x},t)\big] \\
    %%%
    \label{Eqn:Backwards_Lemma_step_9_c}
    &\frac{\partial \big(1 \star \gamma \, \mathbf{b}_1\big)}{\partial t} 
    = \gamma \, \mathbf{b}_{1}(\mathbf{x},t)
    %%%
    \end{align}
    The last term is independent of time, and we thus have: 
    %------------;
    %  Step 10:  ;
    %------------;
    \begin{align}
    %%%
    \label{Eqn:Backwards_Lemma_step_10}
    &\frac{\partial}{\partial t} 
    \left\{\frac{\gamma}{\phi_1(\mathbf{x})} \, \mathbf{u}_{01}(\mathbf{x}) \right\} 
    = \mathbf{0}
    %%%
    \end{align}
    Eqs.~\eqref{Eqn:Backwards_Lemma_step_8}--\eqref{Eqn:Backwards_Lemma_step_10} alongside Eq.~\eqref{Eqn:Backwards_Lemma_step_7} verifies Eq.~\eqref{Eqn:Backwards_BoLM_1}. Following a similar procedure, we can verify the other governing equations \eqref{Eqn:Backwards_BoLM_2}--\eqref{Eqn:Backwards_pBC_2}. 

    We now show that Eqs.~\eqref{Eqn:Backwards_BoLM_1}--\eqref{Eqn:Backwards_vIC_2} imply Eqs.~\eqref{Eqn:Backwards_equivalent_BoLM_1}--\eqref{Eqn:Backwards_equivalent_pBC_2}. Equations~\eqref{Eqn:Backwards_equivalent_BoM_1}--\eqref{Eqn:Backwards_equivalent_pBC_2} are, respectively, mere convolutions of Eqs.~\eqref{Eqn:Backwards_BoM_1}--\eqref{Eqn:Backwards_pBC_2} with ``$1$.'' 
    Equations~\eqref{Eqn:Backwards_equivalent_BoLM_1} and \eqref{Eqn:Backwards_equivalent_BoLM_2} can be obtained by convolving Eqs.~\eqref{Eqn:Backwards_BoLM_1} and \eqref{Eqn:Backwards_BoLM_2} with unity and utilizing the initial velocity conditions \eqref{Eqn:Backwards_vBC_1} and \eqref{Eqn:Backwards_vBC_2}, respectively. This completes the proof of the lemma. 
\end{proof}

%==================================;
%  Theorem: Variational principle  ;
%==================================;
\begin{theorem}[Variational principle]
Consider the following functional: 
\begin{align}
    \Psi[\mathbf{u}_1,\mathbf{u}_2,p_1,p_2] 
    &:= \int_{\Omega} \frac{1}{2} \, \frac{\gamma}{\phi_1(\mathbf{x})} \mathbf{u}_1  \star 
    \mathbf{u}_1 \; \mathrm{d} \Omega 
    + \int_{\Omega} \frac{1}{2} \, \frac{\gamma}{\phi_2(\mathbf{x})} \mathbf{u}_2  \star 
    \mathbf{u}_2 \; \mathrm{d} \Omega \nonumber \\
    &\qquad + \int_{\Omega} \frac{1}{2} \, \mathbf{u}_1 \star 1 \star \mu \, \mathbf{K}_{1}^{-1}(\mathbf{x}) \, \mathbf{u}_{1} \; \mathrm{d} \Omega 
    + \int_{\Omega} \frac{1}{2} \, \mathbf{u}_2 \star 1 \star \mu \, \mathbf{K}_{2}^{-1}(\mathbf{x}) \, \mathbf{u}_{2} \; \mathrm{d} \Omega \nonumber \\ 
    &\qquad - \int_{\Omega} \mathrm{div}[\mathbf{u}_1] \star 1 \star p_{1} \; \mathrm{d} \Omega  
    - \int_{\Omega} \mathrm{div}[\mathbf{u}_2] \star 1 \star p_{2} \; \mathrm{d} \Omega  \nonumber \\
    &\qquad - \int_{\Omega} \frac{\beta}{2 \, \mu}
    \big(p_1 - p_2\big) \star 1 \star \big(p_{1} - p_2\big) \; \mathrm{d} \Omega  \nonumber \\
    &\qquad  + \int_{\Gamma_1^{u}} u_{n1} \star 1 \star p_1 \; \mathrm{d} \Gamma 
    + \int_{\Gamma_1^{p}} \mathbf{u}_1 \bullet \widehat{\mathbf{n}}(\mathbf{x}) \star 1 \star p_{\mathrm{p}1} \; 
    \mathrm{d} \Gamma  \nonumber \\
    &\qquad  + \int_{\Gamma_2^{u}} u_{n2} \star 1 \star p_2 \; \mathrm{d} \Gamma
    + \int_{\Gamma_2^{p}} \mathbf{u}_2 \bullet \widehat{\mathbf{n}}(\mathbf{x}) \star 1 \star p_{\mathrm{p}2} \; 
    \mathrm{d} \Gamma  \nonumber \\
    &\qquad - \int_{\Omega} \mathbf{u}_{1} \star 1 \star \gamma \, \mathbf{b}_{1} \; \mathrm{d} \Omega 
    - \int_{\Omega} \frac{\gamma}{\phi_{1}(\mathbf{x})} \mathbf{u}_1 \star \mathbf{u}_{01}(\mathbf{x}) \;  \mathrm{d} \Omega \nonumber \\
    &\qquad - \int_{\Omega} \mathbf{u}_{2} \star 1 \star \gamma \, \mathbf{b}_{2} \; \mathrm{d} \Omega 
    - \int_{\Omega} \frac{\gamma}{\phi_{2}(\mathbf{x})} \mathbf{u}_2 \star \mathbf{u}_{02}(\mathbf{x}) \;  \mathrm{d} \Omega 
\end{align}
The corresponding G\^ateaux variation is defined as follows: 
\begin{align}
    \delta \Psi\big[\mathbf{u}_1,\mathbf{u}_2,p_1,p_2;\delta \mathbf{u}_1,\delta \mathbf{u}_2,\delta p_1,\delta p_2\big] 
    := \left[\frac{d \, \Psi\big[\mathbf{u}_1 + \epsilon \, \delta \mathbf{u}_1,\mathbf{u}_2 + \epsilon \, \delta \mathbf{u}_2, p_1 + \epsilon \, \delta p_1,p_2 + \epsilon \, \delta p_2\big]}{d \epsilon} \right]_{\epsilon = 0}
\end{align}
Then the solution of the following equation is a solution of the original governing equations \eqref{Eqn:Backwards_BoLM_1}--\eqref{Eqn:Backwards_vIC_2}: 
\begin{align}
    \delta \Psi\big[\mathbf{u}_1,\mathbf{u}_2,p_1,p_2;\delta \mathbf{u}_1,\delta \mathbf{u}_2,\delta p_1,\delta p_2\big] 
    = 0 \quad \forall \delta \mathbf{u}_1,\delta \mathbf{u}_2,\delta p_1,\delta p_2
\end{align}
\end{theorem}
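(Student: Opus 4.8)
The plan is to adapt \textsc{Gurtin}'s convolution technique: I would compute the G\^ateaux variation $\delta\Psi$ in closed form, reorganize it so that each surviving term carries exactly one of the arbitrary variations $\delta\mathbf{u}_1$, $\delta\mathbf{u}_2$, $\delta p_1$, $\delta p_2$, and then appeal to the arbitrariness of these variations to force the coefficient of each to vanish. The target is to show that these stationarity conditions are \emph{verbatim} the equivalent system \eqref{Eqn:Backwards_equivalent_BoLM_1}--\eqref{Eqn:Backwards_equivalent_pBC_2}; Lemma~\ref{Eqn:Lemma_variational_intermediate_result} then immediately upgrades any stationary point to a genuine solution of the original governing equations \eqref{Eqn:Backwards_BoLM_1}--\eqref{Eqn:Backwards_vIC_2}.

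First I would differentiate $\Psi[\mathbf{u}_1 + \epsilon\,\delta\mathbf{u}_1,\ldots]$ with respect to $\epsilon$ and evaluate at $\epsilon = 0$, exploiting that every integrand is at most quadratic in the fields through Cauchy--Riemann convolutions. Each self-convolution term of the form $\tfrac12(\cdot)\star(\cdot)$ yields a single cross term, since the factor $\tfrac12$ cancels against the two equal contributions guaranteed by the commutativity \eqref{Eqn:Backwards_commutative}. For the permeability and mass-transfer terms I would additionally invoke the componentwise symmetry of $\mathbf{K}_i^{-1}(\mathbf{x})$ (combined with commutativity of the scalar convolution) to establish $\mathbf{u}_i \star 1 \star \mu\,\mathbf{K}_i^{-1}\,\delta\mathbf{u}_i = \delta\mathbf{u}_i \star 1 \star \mu\,\mathbf{K}_i^{-1}\,\mathbf{u}_i$, so that these quadratic forms linearize into a single clean term in each variation. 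The bilinear coupling $\mathrm{div}[\mathbf{u}_i]\star 1\star p_i$ splits into a $\delta\mathbf{u}_i$ piece (carrying $\mathrm{div}[\delta\mathbf{u}_i]$) and a $\delta p_i$ piece.

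Next I would use the convolution Green's identity \eqref{Eqn:Backwards_Greens_identity} on the $\delta\mathbf{u}_i$ piece that still carries a spatial derivative, transferring the divergence onto the pressure and producing the domain factor $1\star\mathrm{grad}[p_i]$ together with a boundary integral $\int_{\partial\Omega}(\delta\mathbf{u}_i\bullet\widehat{\mathbf{n}})\star 1\star p_i\,\mathrm{d}\Gamma$. Collecting the bulk coefficients, the coefficient of $\delta\mathbf{u}_1$ becomes $\tfrac{\gamma}{\phi_1}\mathbf{u}_1 + 1\star\mu\,\mathbf{K}_1^{-1}\mathbf{u}_1 + 1\star\mathrm{grad}[p_1] - 1\star\gamma\,\mathbf{b}_1 - \tfrac{\gamma}{\phi_1}\mathbf{u}_{01}$, which is exactly the residual of \eqref{Eqn:Backwards_equivalent_BoLM_1} (and similarly for $\delta\mathbf{u}_2$); the coefficient of $\delta p_1$ collapses to $-\mathrm{div}[\mathbf{u}_1]-\tfrac{\beta}{\mu}(p_1-p_2)$ convolved with $1$, reproducing \eqref{Eqn:Backwards_equivalent_BoM_1}, while the opposite sign inherited from the mass-transfer term of $\Psi$ yields \eqref{Eqn:Backwards_equivalent_BoM_2} for $\delta p_2$. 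Splitting $\partial\Omega = \Gamma_i^u\cup\Gamma_i^p$ and combining with the prescribed-data boundary integrals already built into $\Psi$, the $\Gamma_i^p$ traces assemble into the pressure residual $(\delta\mathbf{u}_i\bullet\widehat{\mathbf{n}})\star 1\star(p_i - p_{\mathrm{p}i})$, which yields the prescribed-pressure condition, while the normal-velocity condition on $\Gamma_i^u$ is reproduced from the $u_{ni}$-integrals in $\Psi$ together with the admissible class.

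Finally, I would invoke a fundamental lemma of the calculus of variations adapted to time convolutions: choosing separable variations $\delta\mathbf{u}_i(\mathbf{x},t) = \mathbf{a}(\mathbf{x})\,\chi(t)$ reduces each condition $\int_\Omega(\cdot)\star\delta\mathbf{u}_i\,\mathrm{d}\Omega = 0$ to a temporal convolution that must hold for all $\chi$ and all $t$, and since convolution has no nontrivial zero divisors this forces the corresponding coefficient field to vanish identically; applying it to every bulk and boundary coefficient recovers \eqref{Eqn:Backwards_equivalent_BoLM_1}--\eqref{Eqn:Backwards_equivalent_pBC_2}, after which Lemma~\ref{Eqn:Lemma_variational_intermediate_result} closes the argument. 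I expect the boundary bookkeeping to be the main obstacle: one must track carefully which traces survive on $\Gamma_i^u$ versus $\Gamma_i^p$, pair them with the correct variation, and respect the essential/natural split so that the normal-velocity data on $\Gamma_i^u$ is accounted for consistently. The convolution symmetrization of the $\mathbf{K}_i^{-1}$ and $\beta$ quadratic forms is the other delicate point, where a mis-applied instance of \eqref{Eqn:Backwards_commutative} or the tensor symmetry could introduce a spurious sign.
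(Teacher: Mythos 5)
Your proposal follows essentially the same route as the paper's proof: compute the G\^ateaux variation term by term, symmetrize the quadratic $\mathbf{K}_i^{-1}$ and $\beta$ forms via commutativity of convolution and tensor symmetry, apply the convolution Green's identity \eqref{Eqn:Backwards_Greens_identity} to move the divergence off $\delta\mathbf{u}_i$, collect coefficients of each independent variation so they reproduce the residuals of \eqref{Eqn:Backwards_equivalent_BoLM_1}--\eqref{Eqn:Backwards_equivalent_pBC_2}, and then invoke Lemma~\ref{Eqn:Lemma_variational_intermediate_result} to recover the original system. If anything, you are more explicit than the paper on the two delicate points---the convolution-adapted fundamental lemma (no nontrivial zero divisors of convolution) and the $\Gamma_i^u$ versus $\Gamma_i^p$ trace bookkeeping---which the paper dispatches with a brief appeal to the arbitrariness of the variations.
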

%------------------------;
%  Proof of the theorem  ;
%------------------------;
\begin{proof}
    We first calculate the G\^ateaux variation $\delta \Psi$: 
    \begin{align}
    \delta \Psi[\mathbf{u}_1,\mathbf{u}_2,p_1,p_2;\delta\mathbf{u}_1,\delta\mathbf{u}_2,\delta p_1,\delta p_2] 
    &:= \int_{\Omega} \frac{\gamma}{\phi_1(\mathbf{x})} \delta \mathbf{u}_1  \star 
    \mathbf{u}_1 \; \mathrm{d} \Omega 
    + \int_{\Omega} \frac{\gamma}{\phi_2(\mathbf{x})} \delta \mathbf{u}_2  \star 
    \mathbf{u}_2 \; \mathrm{d} \Omega \nonumber \\
    &+ \int_{\Omega} \delta \mathbf{u}_1 \star 1 \star \mu \, \mathbf{K}_{1}^{-1}(\mathbf{x}) \, \mathbf{u}_{1} \; \mathrm{d} \Omega 
    + \int_{\Omega} \delta \mathbf{u}_2 \star 1 \star \mu \, \mathbf{K}_{2}^{-1}(\mathbf{x}) \, \mathbf{u}_{2} \; \mathrm{d} \Omega \nonumber \\ 
    &- \int_{\Omega} \mathrm{div}[\delta \mathbf{u}_1] \star 1 \star p_{1} \; \mathrm{d} \Omega  
    - \int_{\Omega} \mathrm{div}[\mathbf{u}_1] \star 1 \star \delta p_{1} \; \mathrm{d} \Omega  \nonumber \\
    &- \int_{\Omega} \mathrm{div}[\delta \mathbf{u}_2] \star 1 \star p_{2} \; \mathrm{d} \Omega 
    - \int_{\Omega} \mathrm{div}[\mathbf{u}_2] \star 1 \star \delta p_{2} \; \mathrm{d} \Omega
    \nonumber \\
    &- \int_{\Omega} \frac{\beta}{\mu}
    \big(\delta p_1 - \delta p_2\big) \star 1 \star \big(p_{1} - p_2\big) \; \mathrm{d} \Omega  \nonumber \\
    &+ \int_{\Gamma_1^{u}} u_{n1} \star 1 \star \delta p_1 \; \mathrm{d} \Gamma 
    + \int_{\Gamma_1^{p}} \delta \mathbf{u}_1 \bullet \widehat{\mathbf{n}}(\mathbf{x}) \star 1 \star p_{\mathrm{p}1} \; 
    \mathrm{d} \Gamma  \nonumber \\
    &+ \int_{\Gamma_2^{u}} u_{n2} \star 1 \star \delta p_2 \; \mathrm{d} \Gamma
    + \int_{\Gamma_2^{p}} \delta \mathbf{u}_2 \bullet \widehat{\mathbf{n}}(\mathbf{x}) \star 1 \star p_{\mathrm{p}2} \; 
    \mathrm{d} \Gamma  \nonumber \\
    &- \int_{\Omega} \delta \mathbf{u}_{1} \star 1 \star \gamma \, \mathbf{b}_{1} \; \mathrm{d} \Omega 
    - \int_{\Omega} \frac{\gamma}{\phi_{1}(\mathbf{x})} \delta \mathbf{u}_1 \star \mathbf{u}_{01}(\mathbf{x}) \;  \mathrm{d} \Omega \nonumber \\
    &- \int_{\Omega} \delta \mathbf{u}_{2} \star 1 \star \gamma \, \mathbf{b}_{2} \; \mathrm{d} \Omega 
    - \int_{\Omega} \frac{\gamma}{\phi_{2}(\mathbf{x})} \delta \mathbf{u}_2 \star \mathbf{u}_{02}(\mathbf{x}) \;  \mathrm{d} \Omega 
\end{align}
Using Green's identity and the symmetry property of convolutions, we arrange the terms as follows: 
\begin{align}
    &\delta \Psi[\mathbf{u}_1,\mathbf{u}_2,p_1,p_2;\delta \mathbf{u}_1,\delta \mathbf{u}_2,\delta p_1,\delta p_2] \nonumber \\ 
    &\quad = \int_{\Omega} \delta \mathbf{u}_1 \star 
    \left(\frac{\gamma}{\phi_1(\mathbf{x})} 
    \mathbf{u}_1 
    + 1 \star \mu \, \mathbf{K}_{1}^{-1}(\mathbf{x}) \, \mathbf{u}_{1} 
    + 1 \star \mathrm{grad}[p_1] - 1 \star \gamma \, \mathbf{b}_{1} - \frac{\gamma}{\phi_{1}(\mathbf{x})} \mathbf{u}_{01}(\mathbf{x})  \right) \; \mathrm{d} \Omega \nonumber \\ 
    &\qquad + \int_{\Omega} \delta \mathbf{u}_2 \star 
    \left(\frac{\gamma}{\phi_2(\mathbf{x})} 
    \mathbf{u}_2 
    + 1 \star \mu \, \mathbf{K}_{2}^{-1}(\mathbf{x}) \, \mathbf{u}_{2} 
    + 1 \star \mathrm{grad}[p_2] - 1 \star \gamma \, \mathbf{b}_{2} - \frac{\gamma}{\phi_{2}(\mathbf{x})} \mathbf{u}_{02}(\mathbf{x})  \right) \; \mathrm{d} \Omega \nonumber \\ 
    &\qquad - \int_{\Omega} \delta p_1 \star 
    1 \star \left(\mathrm{div}[\mathbf{u}_1] 
    + \frac{\beta}{\mu}
    \big(p_{1} - p_2\big) \right) 
    \; \mathrm{d} \Omega  
    - \int_{\Omega} \delta p_2 \star 
    1 \star  
    \left(\mathrm{div}[\mathbf{u}_2] 
    - \frac{\beta}{\mu}
    \big(p_{1} - p_2\big) \right) 
    \; \mathrm{d} \Omega  \nonumber \\
    &\qquad  - \int_{\Gamma_1^{u}}
    \delta p_1 \star 1 \star 
    \big(\mathbf{u}_1 \bullet \widehat{\mathbf{n}}(\mathbf{x}) - u_{n1}\big) \; \mathrm{d} \Gamma
    - \int_{\Gamma_2^{u}}
    \delta p_2 \star 1 \star 
    \big(\mathbf{u}_2 \bullet \widehat{\mathbf{n}}(\mathbf{x}) - u_{n2}\big) \; \mathrm{d} \Gamma 
    \nonumber \\ 
    &\qquad - \int_{\Gamma_1^{p}} \delta \mathbf{u}_1 \bullet \widehat{\mathbf{n}}(\mathbf{x}) \star 1 \star \big(p_1 - p_{\mathrm{p}1}\big) \; 
    \mathrm{d} \Gamma 
    - \int_{\Gamma_2^{p}} \delta \mathbf{u}_2 \bullet \widehat{\mathbf{n}}(\mathbf{x}) \star 1 \star \big(p_2 - p_{\mathrm{p}2}\big) \; 
    \mathrm{d} \Gamma   
\end{align}

Now invoking the condition that $\delta \Psi$ vanishes for any arbitrary choice of $\{\delta \mathbf{u}_1,\delta \mathbf{u}_2,\delta p_1,\delta p_2\}$, the fundamental theorem of calculus implies that Eqs.~\eqref{Eqn:Backwards_equivalent_BoLM_1}--\eqref{Eqn:Backwards_equivalent_pBC_2} are met. 
Lemma \ref{Eqn:Lemma_variational_intermediate_result} implies that the solution of Eqs.~\eqref{Eqn:Backwards_equivalent_BoLM_1}--\eqref{Eqn:Backwards_equivalent_pBC_2} is a solution of the original governing equations \eqref{Eqn:Backwards_BoLM_1}--\eqref{Eqn:Backwards_vIC_2}.
\end{proof}

%*********************************************;
%                                             ;
%  NAME                                       ;
%    S6_Backwards_Closure.tex                 ;
%                                             ;
%  WRITTEN BY                                 ;
%    Kalyana B. Nakshatrala                   ;
%                                             ;
%*********************************************;
\section{CLOSURE}
\label{Sec:S6_Backwards_Closure}

We presented three qualitative properties the double porosity/permeability (DPP) model satisfies in the transient regime: backward-in-time uniqueness of solutions, reciprocal theorem, similar to Betti's reciprocal relations available in linearized elasticity, and a variational principle. These results add to the repertoire of the results available in the transient regime under the DPP model. All these results advance the theoretical understanding of the DPP model and serve as \emph{a posteriori} error measures to assess the accuracy of numerical formulations. The ``backward-in-time uniqueness'' property puts the inverse problem of identifying the initial condition on a firm footing. Two possible future works are: (1) utilizing these qualitative properties in numerical verification studies and (2) developing an inversion framework for transient DPP models.

%=====================;
%  Data availability  ;
%=====================;
\section*{DATA AVAILABILITY}
Data sharing does not apply to this article as no new data were created or analyzed.

%================;
%  Bibliography  ;
%================;
\bibliographystyle{plainnat}
\bibliography{Master_References}
\end{document}